   \newtheorem{lemma}{Lemma}[section]
   \newtheorem{theorem}[lemma]{Theorem}
   \newtheorem{prop}[lemma]{Proposition}
\newcommand{\R}{{\mathbb R}}
\newcommand{\beq}{\begin{equation}}
\newcommand{\eeq}{\end{equation}}
\newcommand{\Del}{\Delta}
\newcommand{\nab}{\nabla}
\newcommand{\lam}{\lambda}
\newcommand{\ome}{\omega}
\newcommand{\patl}{\partial}
\title[Navier-Stokes Equations]{Cylindrical Symplectic Representation
and Global Regular Solution of Incompressible Navier-Stokes Equations
in $\R^3$}
\author{Yongqian Han}
\date{}
\address{
Institute of Applied Physics and Computational Mathematics,
Beijing 100088, China}
\address{
National Key Laboratory of Computational Physics,
Beijing 100088, China}
\email{han\_yongqian@iapcm.ac.cn}
\keywords{Incompressible Navier-Stokes Equations, Symplectic Symmetry,
Global Regular Solution.}
\subjclass[2000]{35Q30, 76D05, 76F02, 37L20}
\begin{document}

\begin{abstract} The existence and uniqueness of global regular
solution of incompressible Navier-Stokes equations in $\R^3$ are
derived provided the initial velocity vector field holds a special
structure.
\end{abstract}

\maketitle

~ ~ ~ ~

\section{Introduction}
\setcounter{equation}{0}

The Navier-Stokes equations in $\R^3$ with initial data are given by
\beq\label{NS1}
u_t-\nu\Del u+(u\cdot\nab)u+\nab P=0,
\eeq
\beq\label{NS2}
\nab\cdot u=0,
\eeq
\beq\label{NSi}
u|_{t=0}=u_0,
\eeq
where $u=u(t,x)=\big(u^1(t,x),u^2(t,x),u^3(t,x)\big)$ and $P=P(t,x)$
stand for the unknown velocity vector field of fluid and its
pressure, $u_0=u_0(x)=\big(u^1_0(x),u^2_0(x),u^3_0(x)\big)$ is
the given initial velocity vector field satisfying $\nab\cdot u_0=0$,
$\nu>0$ is the coefficient of viscosity. Here $\patl_{x_j}$ denotes
by $\patl_j$ ($j=1,2,3$).

For the mathematical setting of this problem, we introduce
Hilbert space
\[
H(\R^3)=\big\{u\in\big(L^2(\R^3)\big)^3\big|\nab \cdot u=0\big\}
\]
endowed with $\big(L^2(\R^3)\big)^3$ norm (resp. scalar product
denoted by $(\cdot,\cdot)$ ). For simplicity of presentation, space
$\big(H^m(\R^3)\big)^3$ denotes by $H^m$, where $m\ge0$. In what
follows we use the usual convention and sum over the repeated indices.

The Fourier transformation of $u(t,x)$ with respect to $x$ denotes by
$\hat{u}(t,\xi)$. Then equation \eqref{NS2} can be rewritten as follows
\beq\label{NS2-F}
\xi_j\hat{u}^j=\xi_1\hat{u}^1+\xi_2\hat{u}^2+\xi_3\hat{u}^3=0.
\eeq
It is that $\hat{u}(t,\xi)$ is perpendicular to $\xi$. Denote by $\xi
\bot\hat{u}$. Equivalently $\hat{u}(t,\xi)\in T_{\xi}{\mathbb{S}}^2$.

Let $A=(a,b,c)\in\R^3-\{0\}$ and $\xi\in{\mathbb{S}}^2$. The vector
$\{A\times\xi\}\in T_{\xi}{\mathbb{S}}^2$ is called the 1st order
incompressible symplectic symmetry. Take $3\times3$ matrix $M=\big(
m^{ij}\big)$. The vector $\{\xi\times(M\xi)\}\in T_{\xi}{\mathbb{S}}
^2$ is called the 2nd order incompressible symplectic symmetry.
Generally let $T=\big(T^{i_1\cdots i_n}\big)$ be $n$th order tensor
and $T(\xi\cdots\xi)=T^{i_1i_2\cdots i_n}\xi_{i_2}\cdots\xi_{i_n}$.
The vector $\{\xi\times T(\xi\cdots\xi)\}\in T_{\xi}{\mathbb{S}}^2$
is called the $n$th order incompressible symplectic symmetry.

Let vectors $A\in\R^3-\{0\}$ and $B\in\R^3-\{0\}$ be linear
independent. Then $A\times\xi$ and $B\times\xi$ are linear independent
for any $\xi\in\R^3-\{0\}$, and form the basis of space $T_{\xi}
{\mathbb{S}}^2$ which is so-called moving frame. There exist $
\hat{\phi}(\xi)$ and $\hat{\psi}(\xi)$ such that
\[
\hat{u}(\xi)=\hat{\phi}(\xi)\cdot\{A\times\xi\}+\hat{\psi}(\xi)\cdot
\{B\times\xi\},\;\;\forall \hat{u}(\xi)\in T_{\xi}{\mathbb{S}}^2.
\]

For any velocity vector $u$ satisfying the equation\eqref{NS2}, there
exist real scalar functions $\phi$ and $\psi$ such that
\beq\label{CSR12-R3}\begin{split}
u(t,x)=&(A\times\nab)\phi(t,x)+\{(A\times\nab)\times\nab\}\psi(t,x),\\
\end{split}\eeq
where vector $A=(a_1,a_2,a_3)\in\R^3-\{0\}$. The formulation
\eqref{CSR12-R3} is called (1,2)-symplectic representation of the
velocity vector $u$.

Let
\beq\label{Def-CSR12-ome}\begin{split}
\ome(t,x)=&\nab\times u(t,x)\\
=&-\{(A\times\nab)\times\nab\}\phi(t,x)+(A\times\nab)\Del\psi(t,x).\\
\end{split}\eeq
Taking curl with equation \eqref{NS1}, we drive
\beq\label{NS-CSR12-ome}
\ome_t-\nu\Del\ome+(u\cdot\nab)\ome-(\ome\cdot\nab)u=0.
\eeq

Thanks the following observations
\beq\label{CSR12-phi}\begin{split}
(A\times\nab)\cdot u(t,x)&=(A\times\nab)\cdot(A\times\nab)\phi(t,x)\\
&=\big\{(A\cdot A)\Del-(A\cdot\nab)^2\big\}\phi(t,x),\\
\end{split}\eeq
\beq\label{CSR12-psi}\begin{split}
(A\times\nab)\cdot \ome(t,x)&=(A\times\nab)\cdot(A\times\nab)\Del\psi(t,x)\\
&=\big\{(A\cdot A)\Del-(A\cdot\nab)^2\big\}\Del\psi(t,x),\\
\end{split}\eeq
taking scalar product of equation \eqref{NS1} with $A\times\nab$,
we obtain
\beq\label{NS-CSR12-phi0}
\big\{(A\cdot A)\Del-(A\cdot\nab)^2\big\}\{\phi_t-\nu\Del\phi\}
+(A\times\nab)\cdot\{(u\cdot\nab)u\}=0.
\eeq
And taking scalar product of equation \eqref{NS-CSR12-ome} with
$A\times\nab$, we get
\beq\label{NS-CSR12-psi0}
\big\{(A\cdot A)\Del-(A\cdot\nab)^2\big\}\Del\{\psi_t-\nu\Del\psi\}
+(A\times\nab)\cdot\{(u\cdot\nab)\ome-(\ome\cdot\nab)u\}=0.
\eeq

There is a large literature studying the incompressible Navier-Stokes
equations. In 1934 Leray \cite{Le34} proved that there exists a global
weak solution to the problem \eqref{NS1}--\eqref{NSi} with initial data
in $L^2$. In 1951 Hopf \cite{Hop51} extended this result to bounded
smooth domain. Moreover Leray-Hopf weak solutions satisfy energy
inequality \cite{Te01}
\beq\label{En-Ineq}
\|u(t,\cdot)\|^2_{L^2}+2\int^t_0\|\nab u(\tau,\cdot)\|^2_{L^2}d\tau
\le \|u_0\|^2_{L^2},\;\;\;\;\forall t>0.
\eeq

The uniqueness and regularity of Leray-Hopf weak solution is a famous
open question. Numerous regularity criteria were proved \cite{ESS03,
Gig86,KeK11,KNSS09,La69,Pro59,Ser62,Wah86}.

Local existence and uniqueness of $H^m$ solution can be established
by using analytic semigroup \cite{Lun95} with initial data in
$H^m(\R^3)$, $m\ge1$. This result is stated as follows.

\begin{prop}[Local $H^m$ Solution]\label{NS-LHmS} Let $u_0\in H^m(\R^3)
\cap H(\R^3)$ and $m\ge1$. Then there exist $T_{max}=T_{max}\big(
\|u_0\|_{H^m}\big)>0$ and a unique solution $u$ of the problem
\eqref{NS1}--\eqref{NSi} such that $u\in C\big([0,T_{max});H^m(\R^3)
\cap H(\R^3)\big)$.
\end{prop}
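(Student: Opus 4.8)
The plan is to remove the pressure by the Leray--Helmholtz projection and solve the resulting abstract semilinear parabolic equation with the analytic semigroup calculus of \cite{Lun95}. Let $\PX\colon\big(L^2(\R^3)\big)^3\to H(\R^3)$ be the orthogonal projection onto divergence-free fields, with Fourier symbol $\widehat{\PX v}(\xi)=\hat v(\xi)-|\xi|^{-2}\big(\xi\cdot\hat v(\xi)\big)\xi$. Applying $\PX$ to \eqref{NS1} and using $\PX u=u$, $\PX\nab P=0$, the problem becomes
\beq\label{abstract-NS}
u_t+\nu\A u=F(u),\qquad u|_{t=0}=u_0,
\eeq
where $\A=-\PX\Del$ is the Stokes operator and $F(u)=-\PX\big((u\cdot\nab)u\big)=-\PX\,\nab\!\cdot(u\otimes u)$. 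On $\R^3$ the projection commutes with $\Del$, so $\A$ is nothing but $-\Del$ restricted to divergence-free fields.

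For the functional setting I would use the Fourier multiplier $\widehat{\A v}(\xi)=|\xi|^2\hat v(\xi)$: this shows $\A$ is a nonnegative self-adjoint, hence sectorial, operator on $H(\R^3)$, that $-\nu\A$ generates an analytic semigroup $e^{-\nu t\A}$, and that $D(\A^{s/2})=H^s(\R^3)\cap H(\R^3)$ with $\|\A^{s/2}v\|_{L^2}$ equivalent to $\|v\|_{H^s}$ on divergence-free $v$. I would then realize \eqref{abstract-NS} in the base space $X=\big(H^{m-1}(\R^3)\big)^3\cap H(\R^3)$ with interpolation exponent $\al=\tfrac12$, so that $D(\A^{\al})=\big(H^{m}(\R^3)\big)^3\cap H(\R^3)$ is exactly the space of admissible data; then \eqref{abstract-NS} has the form $u'=-\A u+F(u)$ with $\A$ sectorial and $F$ defined on $D(\A^{\al})$, which is precisely the hypothesis of the abstract local existence and uniqueness theorem in \cite{Lun95}.

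The main work is the nonlinearity estimate. Writing $u\otimes u-v\otimes v=(u-v)\otimes u+v\otimes(u-v)$ and using the Sobolev product inequalities in $\R^3$ (for $m>\tfrac32$ one has $H^m\hookrightarrow L^\infty$, so multiplication by an $H^m$ function is bounded on $H^{m-1}$), one obtains
\beq\label{nonlin-est}
\big\|F(u)-F(v)\big\|_{H^{m-1}}\le C\big(\|u\|_{H^m}+\|v\|_{H^m}\big)\,\|u-v\|_{H^m},
\eeq
i.e. $F\colon D(\A^\al)\to X$ is locally Lipschitz. Inserting \eqref{nonlin-est} into the Duhamel formulation
\beq\label{Duhamel}
u(t)=e^{-\nu t\A}u_0+\int_0^t e^{-\nu(t-\tau)\A}F\big(u(\tau)\big)\,d\tau
\eeq
and using the smoothing bound $\|\A^{\al}e^{-\nu t\A}\|_{X\to X}\le Ct^{-\al}$ with $\al=\tfrac12<1$ (integrable near $\tau=t$), a contraction mapping argument in $C\big([0,T];D(\A^\al)\big)$ produces a unique solution on an interval whose length depends only on $\|u_0\|_{H^m}$ and $\nu$. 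A standard continuation argument then yields the maximal existence time $T_{max}=T_{max}\big(\|u_0\|_{H^m}\big)$ and, by the usual parabolic bootstrap, $u\in C\big([0,T_{max});H^m(\R^3)\cap H(\R^3)\big)$. Finally, the pressure is recovered from $\Del P=-\patl_i\patl_j\big(u^iu^j\big)$, equivalently $\nab P=-(I-\PX)\big[(u\cdot\nab)u\big]$, so that $(u,P)$ solves \eqref{NS1}--\eqref{NSi}.

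The delicate point is the validity of \eqref{nonlin-est}, which is clean only for $m>\tfrac32$ (and completely elementary for $m>\tfrac52$, where $H^{m-1}$ is a Banach algebra). In the borderline range $1\le m\le\tfrac32$ the product $(u\cdot\nab)u$ no longer lies in $H^{m-1}$, and one must instead set up \eqref{abstract-NS} in an extrapolated, negative-order (or $L^p$-based) space and exploit the $L^p$--$L^q$ smoothing of the heat semigroup in the manner of Fujita--Kato; this is the only place where more than bookkeeping is required.
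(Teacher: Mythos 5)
The paper never actually proves this proposition: it is stated as background, with the analytic-semigroup theory of \cite{Lun95} cited in place of an argument, so the only comparison available is with the standard proof that citation points to. Your outline is that standard proof, and for $m>\tfrac32$ it is essentially complete: on $\R^3$ the Stokes operator is $-\Del$ restricted to divergence-free fields, your Lipschitz estimate for $F(u)=-\PX\nab\cdot(u\otimes u)$ from $H^m$ into $H^{m-1}$ is valid because $H^m$ multiplies $H^{m-1}$ into itself once $m>\tfrac32$, and the contraction in $C([0,T];D(\A^{1/2}))$ with the $t^{-1/2}$ smoothing bound gives a unique local solution on a time interval depending only on $\|u_0\|_{H^m}$ and $\nu$.

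The genuine gap is the range $1\le m\le\tfrac32$, which the statement includes and which you acknowledge but do not handle: you only gesture at an extrapolated or $L^p$-based setting ``in the manner of Fujita--Kato'' without carrying it out. This case cannot be left open here, because the paper invokes the proposition precisely at the low end ($H^1$ data) in Section 3 to identify the limit of the approximating sequence as the unique solution. The repair is standard and stays inside your framework: for $m=1$ measure the nonlinearity in a negative-order space rather than in $H^{m-1}$, namely $\|(u\cdot\nab)u-(v\cdot\nab)v\|_{H^{-1/2}}\le C\big(\|u\|_{H^1}+\|v\|_{H^1}\big)\|u-v\|_{H^1}$, which follows from H\"older and Sobolev embedding ($H^1\hookrightarrow L^6$, so the difference lies in $L^{3/2}\hookrightarrow H^{-1/2}$); in the Duhamel formula the semigroup must then regain $\tfrac32$ derivatives, costing $t^{-3/4}$, which is still integrable, so the fixed point closes in $C([0,T];H^1(\R^3)\cap H(\R^3))$ with $T=T(\|u_0\|_{H^1},\nu)$. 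The intermediate exponents $1<m\le\tfrac32$ are handled by the same scheme with the nonlinearity estimated in $H^{m-3/2}$, and persistence of higher regularity plus the continuation criterion then gives the full statement. As written, your argument establishes the proposition only for $m>\tfrac32$.
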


Local existence and uniqueness of mild solution or strong solution
were established \cite{Cal90,GiM85,Ka84,KaF62,KaP94,Wei80} with
initial data in $L^p(\R^3)$, $p>3$. The main result is as follows.

\begin{prop}[Local Mild Solution]\label{NS-LMS} Let $u_0\in L^p(\R^3)$
satisfy \eqref{NS2} in distribution and $p>3$. Then there exist $
T_{max}=T_{max}\big(\|u_0\|_{L^p}\big)>0$ and a unique solution $u$ of
the problem \eqref{NS1}--\eqref{NSi} such that $u\in C([0,T_{max});
L^p(\R^3))$.
\end{prop}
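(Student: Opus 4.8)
The final statement to prove is Proposition~\ref{NS-LMS}, the local existence and uniqueness of a mild solution in $C\big([0,T_{max});L^p(\R^3)\big)$ for $u_0\in L^p(\R^3)$ with $p>3$ and $\nab\cdot u_0=0$ in the sense of distributions. The plan is to recast \eqref{NS1}--\eqref{NSi} as the integral (Duhamel) equation associated with the Stokes semigroup, and then to solve it by a contraction-mapping argument in a suitable space of $L^p$-valued continuous functions. Concretely, applying the Leray projector $\mathbb{P}$ onto divergence-free fields to \eqref{NS1} removes the pressure, and one looks for solutions of
\[
u(t)=e^{t\nu\Del}u_0-\int_0^t e^{(t-s)\nu\Del}\,\mathbb{P}\,\nab\cdot\big(u(s)\otimes u(s)\big)\,ds .
\]
First I would record the mapping properties of the heat semigroup on $\R^3$: for $1\le q\le r\le\infty$ one has $\|e^{t\nu\Del}f\|_{L^r}\lesssim t^{-\frac{3}{2}(\frac1q-\frac1r)}\|f\|_{L^q}$, and the same decay with an extra $t^{-1/2}$ when one derivative (from $\nab\cdot$) is applied, together with the $L^q$-boundedness of $\mathbb{P}$ for $1<q<\infty$. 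These estimates are classical and I would simply cite \cite{GiM85,Ka84,Wei80}.

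The core step is to set up the fixed-point scheme. Because $p>3$, the exponent $p$ is subcritical for this nonlinearity, so the cleanest route is to work in the space $X_T$ of functions $u\in C\big([0,T];L^p(\R^3)\big)$ with $\|u\|_{X_T}:=\sup_{0\le t\le T}\|u(t)\|_{L^p}$, possibly augmented by the auxiliary norm $\sup_{0\le t\le T} t^{\,\al}\|u(t)\|_{L^{2p}}$ with $\al=\tfrac{3}{4p}$ to control the quadratic term $u\otimes u\in L^{p}$ via $L^{2p}\times L^{2p}$. I would then show that the map
\[
\Phi(u)(t)=e^{t\nu\Del}u_0-\int_0^t e^{(t-s)\nu\Del}\,\mathbb{P}\,\nab\cdot\big(u(s)\otimes u(s)\big)\,ds
\]
sends a closed ball of $X_T$ into itself and is a contraction there, once $T=T(\|u_0\|_{L^p})$ is chosen small enough. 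The semigroup estimates give, for the bilinear term, a bound of the form $\int_0^t (t-s)^{-\frac12-\frac{3}{4p}} s^{-2\al}\,ds\cdot\|u\|_{X_T}\|v\|_{X_T}=C\,t^{\,\al}\cdot t^{-\frac{3}{4p}+\frac12}\|u\|\|v\|$ after the Beta-function integral converges (this is exactly where $p>3$ enters, keeping both exponents below $1$), which is $\le C T^{\beta}\|u\|\|v\|$ with $\beta>0$; the linear term $e^{t\nu\Del}u_0\to u_0$ in $L^p$ as $t\to0$ by strong continuity of the semigroup, and its auxiliary norm is $O(t^{\al-\frac{3}{4p}})=O(1)$. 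Standard iteration then produces a unique fixed point $u\in X_T$; uniqueness in the full class $C([0,T];L^p)$ follows from the same bilinear estimate applied to the difference of two solutions (a Gronwall/continuity-in-$T$ argument closing on a short interval and then propagating). Finally, the maximal existence time $T_{max}$ is defined as the supremum of such $T$, and one obtains the stated continuation — and the dependence $T_{max}=T_{max}(\|u_0\|_{L^p})$ — from the smallness threshold in the contraction.

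The main obstacle is purely technical rather than conceptual: one must juggle the endpoint behaviour of the time integrals and the two norms so that the auxiliary weighted norm both controls $u\otimes u$ and remains finite at $t=0$, i.e. verifying that all the exponents $\tfrac12+\tfrac{3}{4p}$ and $2\al=\tfrac{3}{2p}$ stay in $(0,1)$ when $p>3$ so the Beta integral converges and yields a positive power of $T$. Equivalently, one checks the scaling bookkeeping that makes $p>3$ subcritical. Once that is in place, self-mapping, contraction, and uniqueness are routine. Since Propositions like this are entirely standard, I expect the write-up to consist mainly of citing \cite{Cal90,GiM85,Ka84,KaF62,KaP94,Wei80} and sketching the fixed-point step, rather than a detailed proof.
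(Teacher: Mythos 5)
The paper gives no proof of this proposition at all: it is quoted as a known result, with the proof delegated to the cited references (Kato, Giga--Miyakawa, Weissler, Kato--Fujita, Kato--Ponce, Calder\'on), and your sketch is exactly the standard Kato-type Duhamel/contraction argument those references carry out, so it is the same approach and is correct. One cosmetic slip in the bookkeeping: the Beta integral gives $t^{\frac12-\frac{9}{4p}}$ (hence a gain $T^{\frac12-\frac{3}{2p}}$ in the weighted norm), and $p>3$ is what makes this power positive (subcriticality), while keeping both integrand exponents below $1$ only requires $p>3/2$.
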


The uniqueness in Proposition \ref{NS-LHmS} and \ref{NS-LMS} ensures
that the symplectic symmetries which are corresponding to initial data
$u_0$ can be kept by the solution $u$.

Besides the local-posedness, the lower bounds of possible blowup
solutions were considered \cite{CSTY08,CSTY09,CMP14,Gig86,Le34,
RSS12}. The concentration phenomena of possible blowup solutions was
studied \cite{LOW18}.

It is well-known that the equations \eqref{NS1} \eqref{NS2} are
scaling-invariant in the sense that if $u$ solves \eqref{NS1}
\eqref{NS2} with initial data $u_0$, so dose $u_{\lam}(t,x)=\lam
u(\lam^2t,\lam x)$ with initial data $\lam u_0(\lam x)$. A space $X$
defined on $\R^3$ is so-called to be critical provided $\|u_0\|_X=
\|\lam u_0(\lam\cdot)\|_X$ for any $\lam>0$. $L^3(\R^3)$ is one of
critical spaces. For the initial data in critical spaces, the
posedness of global solution of the equations \eqref{NS1}--\eqref{NSi}
is obtained \cite{Can97,Che99,KoT01,Pla96} with small
initial data. The regularity criterion was established \cite{ESS03,
GKP13,KeK11,LiW19,Sere12}. On the other hand, the ill-posedness was
showed \cite{BoP08,Ger08,Wang15,Yon10}.

It is also studied that solutions of the problem
\eqref{NS1}--\eqref{NSi} are in various function spaces \cite{Can03,
GiInM99,GiM89,Ka75,KoT01,Tay92}. Partial regularity of suitable weak
solutions was established \cite{CKN82,LaS99,Lin98,Sch76,Sch77,Vas07,
WaW14}. Non-existence of self-similar solutions was proved
\cite{NeRS96,Tsai98}. Decay of the solutions can be found in
\cite{Car96,MiS01,Sch91,Sch92}, etc.

In this paper, we study the global regular solutions of the problem
\eqref{NS1}--\eqref{NSi} with cylindrical symmetry of symplectic
representation.

Now we assume that $\phi$ and $\psi$ are cylindrical symmetric
functions with respect to space variable $x\in\R^3$. It is that
$\phi(t,x)=\phi(t,r,x_3)$, $\psi(t,x)=\psi(t,r,x_3)$  and $r^2=
x_1^2+x_2^2$. Letting $A=e_3=(0,0,1)$ and $\phi=0$, inserting $(\phi,
\psi)=\big(0,\psi(t,r,x_3)\big)$ into equations \eqref{NS-CSR12-phi0}
\eqref{NS-CSR12-psi0}, by long long straightforward calculation,
we derive
\beq\label{CSR12-ps20-ph0}\begin{split}
\patl_r\Del(\psi_t-\nu\Del\psi)
-\Del_r\psi\cdot\patl_r\patl_3\Del\psi
+r\patl_r(\frac1r\patl_r\Del\psi)\cdot\patl_r\patl_3\psi=0,
\end{split}\eeq
where $\Del_r=\patl^2_r+\frac1r\patl_r=r\patl_r(\frac1r\patl_r)
+\frac2r\patl_r$, $\Del=\Del_r+\patl_3^2$.

Suppose that the initial data $u_0$ holds symplectic representation
as follows
\beq\label{IDa-u0ps0}
u_0(x)=\{(A\times\nab)\times\nab\}\psi_0(r,x_3),
\eeq
and $\psi$ satisfies the following initial condition
\beq\label{IDa-ps}
\psi|_{t=0}=\psi_0(r,x_3).
\eeq
Let
\[
u(t,x)=\{(A\times\nab)\times\nab\}\psi(t,r,x_3).
\]
Then the problem \eqref{NS1}--\eqref{NSi} is equivalent to the equation
\eqref{CSR12-ps20-ph0} with initial data \eqref{IDa-ps}.

Firstly we assume that fluid flows in ring cylinder domain
\beq\label{RCD-00}
{\mathbb D}=\{(x_1,x_2,x_3)|0<r_0<r<R_0<\infty,x_3\in\R\},
\eeq
where $r_0$ and $R_0$ are positive constants. Applying equation
\eqref{CSR12-ps20-ph0}, we construct the global solution of problem
\eqref{NS1}--\eqref{NSi} in ${\mathbb D}$ which satisfies a priori
estimate
\beq\label{Est-u-H1D}\begin{split}
&\|u(t,\cdot)\|_{H^1({\mathbb D})}\le C,\;\;\;\;\;\;\forall t\ge0.\\
\end{split}\eeq
Here the constant $C$ is independent of $r_0$ and $R_0$.

Finally let $r_0\rightarrow 0$ and $R_0\rightarrow\infty$, we derive
the main result.

\begin{theorem}[Global Solution]
\label{CSR12-Thm-u} Provided that $m\ge1$, $u_0=\big(\patl_1\patl_3
\psi_0,\patl_2\patl_3\psi_0,-(\patl_1^2+\patl_2^2)\psi_0\big)$,
$\psi_0=\psi_0(r,x_3)$ satisfies
\beq\label{IDa0-upsHm}\begin{split}
&\sum_{j\ge0,\;2j\le m}\int_{\R}\int^{\infty}_0\{(\Del_r\Del^j\psi_0)^2
+(\patl_r\patl_3\Del^j\psi_0)^2\}rdrdx_3\\
&+\sum_{k\ge0,\;2k+1\le m}\int_{\R}\int^{\infty}_0(\patl_r\Del\Del^k
\psi_0)^2rdrdx_3\\
&+\int_{\R}\int^{\infty}_0(\patl_r\Del\psi_0)^2r^{-1}drdx_3<\infty.\\
\end{split}\eeq
Then there exists a unique global solution $u$ of the problem
\eqref{NS1}--\eqref{NSi} such that $u=\big(\patl_1\patl_3\psi,\patl_2
\patl_3\psi,-(\patl_1^2+\patl_2^2)\psi\big)$, $\psi=\psi(t,r,x_3)$ and
\beq\label{GS-uH1}\begin{split}
&u\in C([0,T];H^m(\R^3)\cap H(\R^3))\cap L^2([0,T];H^{m+1}(\R^3)),
\;\;\;\;\;\;\forall T\ge0.\\
\end{split}\eeq
\end{theorem}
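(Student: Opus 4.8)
The strategy is a standard parabolic regularity bootstrap built on the reduced scalar equation \eqref{CSR12-ps20-ph0}, combined with the domain-exhaustion argument already outlined in the introduction. I would proceed in three stages: (i) construct a solution in the ring cylinder domain $\mathbb{D}$ of \eqref{RCD-00} together with the $r_0,R_0$-independent bound \eqref{Est-u-H1D}; (ii) pass to the limit $r_0\to 0$, $R_0\to\infty$ to obtain a global solution on all of $\R^3$ with an $H^1$ bound; (iii) upgrade the regularity from $H^1$ to $H^m$ under the hypothesis \eqref{IDa0-upsHm}, and prove uniqueness.

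For stage (i), I would work directly with \eqref{CSR12-ps20-ph0}. Introducing the auxiliary unknown $w=\patl_r\Del\psi$ (which, up to weights in $r$, captures the relevant component of $u$ and its derivatives), equation \eqref{CSR12-ps20-ph0} becomes a convection–diffusion equation for $w$ of the form $w_t-\nu\Del w + (\text{lower-order transport terms}) = 0$, where the transport coefficients are expressible through $\psi$ and its derivatives. In the annular domain the weight $r$ is bounded above and below by $R_0$ and $r_0$, so the degeneracy of cylindrical coordinates at $r=0$ is absent; one gets local existence of a smooth solution by the analytic-semigroup method of Proposition \ref{NS-LHmS} (or a Galerkin scheme) with suitable boundary conditions on $r=r_0$ and $r=R_0$. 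The crucial point is the a priori estimate \eqref{Est-u-H1D} with a constant independent of $r_0,R_0$: this should come from the energy inequality \eqref{En-Ineq} for $u$ together with an enstrophy-type identity. Here the special structure pays off — because $\phi=0$ and $A=e_3$, the vorticity $\ome$ from \eqref{Def-CSR12-ome} reduces to $\ome=(A\times\nab)\Del\psi$, and the nonlinear term $(\ome\cdot\nab)u$ in \eqref{NS-CSR12-ome} should, after the symplectic reduction, cancel or be absorbed, yielding a closed $H^1$ bound. I expect the key algebraic miracle is that the last two nonlinear terms in \eqref{CSR12-ps20-ph0} combine into a perfect transport term $(\text{something})\cdot\nab(\patl_r\Del\psi)$ with divergence-free coefficient, so that multiplying by $\patl_r\Del\psi$ and integrating kills the nonlinearity.

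For stage (ii), with the uniform bound \eqref{Est-u-H1D} in hand, I would extract a weakly-$\ast$ convergent subsequence of the solutions $u_{r_0,R_0}$ as $r_0\to 0$, $R_0\to\infty$; the bound in $L^\infty_t H^1_x$ plus the parabolic gain giving $L^2_t H^2_x$ yields strong convergence on compact sets via Aubin–Lions, enough to pass to the limit in the (quadratic) nonlinearity and identify the limit as a global weak solution on $\R^3$ with $u\in L^\infty([0,\infty);H^1)$. One must check that the limiting $\psi$ still has the cylindrical symplectic form and that no mass escapes to $r=0$ or $r=\infty$; the weighted $L^2$ hypothesis $\int(\patl_r\Del\psi_0)^2 r^{-1}\,dr\,dx_3<\infty$ in \eqref{IDa0-upsHm} is presumably exactly what controls the behaviour near the axis $r=0$ in the limit.

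For stage (iii), once we know $u\in C([0,T];H^1)\cap L^2([0,T];H^2)$ globally, the Ladyzhenskaya–Prodi–Serrin-type criterion (e.g. the references cited after \eqref{En-Ineq}) promotes $u$ to $C([0,T];H^m)\cap L^2([0,T];H^{m+1})$ for every $m$ compatible with the regularity of the data, i.e.\ under \eqref{IDa0-upsHm}; equivalently one runs a direct energy estimate on $\Del^j w$ and $\patl_r\patl_3\Del^j\psi$ for $2j\le m$, using the already-established $H^1$ control of $u$ to close the higher-order estimates linearly. Uniqueness then follows from Proposition \ref{NS-LHmS} (and Proposition \ref{NS-LMS}), since the constructed solution is a regular solution and regular solutions in this class are unique; the remark in the introduction that uniqueness preserves the symplectic symmetries confirms consistency. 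The main obstacle I anticipate is establishing the $r_0,R_0$-uniform a priori bound \eqref{Est-u-H1D} in stage (i): one must verify the cancellation in the nonlinear terms of \eqref{CSR12-ps20-ph0} and control the boundary contributions on $r=r_0,R_0$ so that they do not blow up as the annulus degenerates; this is where the "long long straightforward calculation" leading to \eqref{CSR12-ps20-ph0} has to be exploited to its full extent.
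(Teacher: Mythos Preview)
Your three-stage plan matches the paper's architecture (Section~2 for the annulus, Section~3 for the limit and bootstrap), and your invocation of Aubin--Lions and Proposition~\ref{NS-LHmS} for the final identification and uniqueness is exactly what the paper does. But there is one concrete misidentification in stage~(i) that would derail the argument if you followed it literally.

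You write that the algebraic miracle should be that multiplying \eqref{CSR12-ps20-ph0} by $\patl_r\Del\psi$ and integrating kills the nonlinearity. It does not. Testing against $r\,\patl_r\Del\psi$ (the natural $H^1$ multiplier, Lemma~\ref{Est-H1-RCD}) leaves behind the cubic term $\int (\patl_r\Del\psi)^2\,\patl_r\patl_3\psi\,dr\,dx_3$ with \emph{no} $r$-weight; this is not zero and cannot be absorbed directly. The exact cancellation you are looking for occurs instead when you test against $r^{-1}\patl_r\Del\psi$ (Lemma~\ref{Est-H1w-RCD}): there the two nonlinear terms do annihilate, giving a clean dissipation identity for $\int (\patl_r\Del\psi)^2 r^{-1}\,dr\,dx_3$. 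In vorticity language this is the familiar no-swirl fact that $\ome_\theta/r$, not $\ome_\theta$, obeys a stretching-free transport--diffusion equation. The $H^1$ estimate then closes only after you feed the $r^{-1}$-weighted bound back into the leftover cubic term and run Gr\"onwall; see \eqref{Est-HI-0}--\eqref{Est-H1-2}. So the hypothesis $\int(\patl_r\Del\psi_0)^2 r^{-1}\,dr\,dx_3<\infty$ in \eqref{IDa0-upsHm} is not merely there to control the axis in the limit $r_0\to 0$ as you guessed; it is the quantity that makes the uniform $H^1$ bound \eqref{Est-u-H1D} possible in the first place. Once you insert this extra weighted estimate between your $L^2$ energy and your $H^1$ enstrophy step, the rest of your outline goes through as written.
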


The plan of this paper is as follows.
Section 2 is devoted to study that fluid flows in ring cylinder
domain ${\mathbb D}$ and establish a priori uniform estimates.
Section 3 is devoted to investigate that fluid flows in $\R^3$ and
give the proof of Theorem \ref{CSR12-Thm-u}.

\section{Fluid in Ring Cylinder Domain}
\setcounter{equation}{0}

In this section, assume that fluid flows in ring cylinder domain
${\mathbb D}$.

We consider the equation \eqref{CSR12-ps20-ph0} with initial data
\eqref{IDa-ps} and boundary condition
\beq\label{BC-RCD00}\begin{split}
&\patl_r\psi|_{r=r_0}=\patl_r\psi|_{r=R_0}=0,\\
&\patl_r\Del\psi|_{r=r_0}=\patl_r\Del\psi|_{r=R_0}=0.\\
\end{split}\eeq

The equation \eqref{CSR12-ps20-ph0} can be rewritten as
\beq\label{CSR12-ps20-ph0-R}\begin{split}
&\Del(\psi_t-\nu\Del\psi)
-\int^r_{r_0}\Del_s\psi\cdot\patl_s\patl_3\Del\psi(t,s,x_3)ds\\
&+\int^r_{r_0}s\patl_s(\frac1s\patl_s\Del\psi)\cdot\patl_s\patl_3
\psi(t,s,x_3)ds=0.
\end{split}\eeq
Moreover the problem \eqref{CSR12-ps20-ph0-R} \eqref{BC-RCD00} is
equivalent to
\beq\label{Equi-ps-w}\begin{split}
&w_t-\nu\Del w
-\int^r_{r_0}\Del_s\psi\cdot\patl_s\patl_3\Del\psi(t,s,x_3)ds\\
&+\int^r_{r_0}s\patl_s(\frac1s\patl_s\Del\psi)\cdot\patl_s\patl_3
\psi(t,s,x_3)ds=0,\\
&\Del\psi=w,\\
&\frac{\patl}{\patl n}\psi|_{\patl{\mathbb D}}=0,
\;\;\frac{\patl}{\patl n}w|_{\patl{\mathbb D}}=0,\\
\end{split}\eeq
where $n$ is the outer normal of $\patl{\mathbb D}$. Therefore the
existence of local solution of problem \eqref{CSR12-ps20-ph0-R}
\eqref{IDa-ps} \eqref{BC-RCD00} can be established by standard method.

To prove existence of global solution of problem \eqref{CSR12-ps20-ph0}
\eqref{IDa-ps} \eqref{BC-RCD00}, it is sufficient to give some a priori
estimates.

\begin{lemma}\label{Est-L2-RCD} For the solutions of problem
\eqref{CSR12-ps20-ph0} \eqref{IDa-ps} \eqref{BC-RCD00}, we have
\beq\label{Est-L2-RCD00}\begin{split}
&\frac{d}{dt}\int_{\R}\int^{R_0}_{r_0}\{(\Del_r\psi)^2
+(\patl_r\patl_3\psi)^2\}rdrdx_3
+2\nu\int_{\R}\int^{R_0}_{r_0}(\patl_r\Del\psi)^2rdrdx_3=0,\\
\end{split}\eeq
where $\Del_r=\patl_r^2+\frac1r\patl_r$.
\end{lemma}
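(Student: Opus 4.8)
The plan is to derive the identity by testing the equation \eqref{CSR12-ps20-ph0} against a suitable multiplier and integrating over the ring cylinder domain. The natural candidate is $-\Del\psi$ (or equivalently, integrating against $\patl_r\Del\psi$ after bringing the $\patl_r$ outside), since the quantity $\int(\Del_r\psi)^2+(\patl_r\patl_3\psi)^2\,r\,dr\,dx_3$ is essentially $\|\Del\psi\|_{L^2({\mathbb D})}^2$ up to integration by parts in the radial variable: indeed $\Del\psi=\Del_r\psi+\patl_3^2\psi$, and expanding $\|\Del\psi\|^2$ and integrating by parts in $x_3$ (no boundary terms, $x_3\in\R$) and in $r$ (using $\patl_r\psi|_{r=r_0}=\patl_r\psi|_{r=R_0}=0$) should convert cross terms into the stated combination. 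So the first step is the algebraic identity
\[
\int_{\R}\int_{r_0}^{R_0}(\Del\psi)^2\,r\,dr\,dx_3
=\int_{\R}\int_{r_0}^{R_0}\bigl\{(\Del_r\psi)^2+(\patl_r\patl_3\psi)^2\bigr\}\,r\,dr\,dx_3,
\]
valid under the boundary condition \eqref{BC-RCD00}; here one must be a little careful with the weight $r$ and with the extra $\frac1r\patl_r$ pieces in $\Del_r$, but the endpoint conditions kill every boundary contribution.

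Next I would multiply \eqref{CSR12-ps20-ph0} by an appropriate factor and integrate $r\,dr\,dx_3$ over ${\mathbb D}$. Writing the equation as $\patl_r\Del(\psi_t-\nu\Del\psi)=\Del_r\psi\cdot\patl_r\patl_3\Del\psi-r\patl_r(\tfrac1r\patl_r\Del\psi)\cdot\patl_r\patl_3\psi$, one tests against $-\Del\psi$; the left-hand side, after moving the $\patl_r$ by parts in $r$ (boundary terms vanish because $\patl_r\Del\psi|_{\patl{\mathbb D}}=0$ from \eqref{BC-RCD00}), produces $\int\patl_r\Del\psi\cdot\patl_r\Del(\psi_t-\nu\Del\psi)\,\tfrac1{r}\cdot r\,dr\,dx_3$-type terms; more cleanly, testing directly against $\patl_r\Del\psi$ with weight $dr\,dx_3$ (no $r$) and then reorganizing gives $\tfrac12\tfrac{d}{dt}\int(\patl_r\Del\psi)^2\,dr\,dx_3$ minus $\nu\int(\patl_r\Del^{2}\psi)(\partial_r\Del\psi)$; after a further integration by parts these should assemble into $\tfrac{d}{dt}\|\Del\psi\|_{L^2({\mathbb D})}^2$ and the dissipation term $2\nu\int(\patl_r\Del\psi)^2\,r\,dr\,dx_3$. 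The crucial point is that the two nonlinear integrals must cancel: the cancellation is the manifestation of the fact that the nonlinearity $(u\cdot\nab)\ome-(\ome\cdot\nab)u$ is energy-neutral in this symmetry class, so the right choice of multiplier will make $\int\bigl(\Del_s\psi\cdot\patl_s\patl_3\Del\psi-s\patl_s(\tfrac1s\patl_s\Del\psi)\cdot\patl_s\patl_3\psi\bigr)\cdot(\text{test function})\,r\,dr\,dx_3=0$ after integration by parts in $r$ and in $x_3$.

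The step I expect to be the main obstacle is exactly this cancellation of the quadratic terms. One has to integrate by parts carefully in $r$, track the weights $r$, $\tfrac1r$, $s$, $\tfrac1s$ coming from the cylindrical Laplacian, and use $\patl_r\partial_3\psi$, $\patl_r\Del\psi$ boundary vanishing; the identity $\Del_r=r\patl_r(\tfrac1r\patl_r)+\tfrac2r\patl_r$ recorded in the statement is presumably needed to rewrite $\Del_r\psi$ so that the two integrands match up to a total $r$-derivative. Once the nonlinear contribution is shown to vanish and the linear part is identified via the algebraic identity of the first step together with one integration by parts in $r$ moving a $\patl_r$ off $\Del^2\psi$, the claimed energy identity \eqref{Est-L2-RCD00} follows immediately. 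I would organize the write-up as: (i) the $L^2$-identification of $\|\Del\psi\|^2$; (ii) the multiplier computation for the time-derivative and viscous terms; (iii) the vanishing of the nonlinearity by integration by parts; (iv) assembling (i)–(iii).
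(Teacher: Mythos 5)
There is a genuine gap, and it begins with your step (i). The identity you propose,
\[
\int_{\R}\int_{r_0}^{R_0}(\Del\psi)^2\,r\,dr\,dx_3
=\int_{\R}\int_{r_0}^{R_0}\bigl\{(\Del_r\psi)^2+(\patl_r\patl_3\psi)^2\bigr\}\,r\,dr\,dx_3,
\]
is false: writing $\Del\psi=\Del_r\psi+\patl_3^2\psi$ and integrating by parts (the boundary terms do vanish under \eqref{BC-RCD00}) gives
\[
\int_{\R}\int_{r_0}^{R_0}(\Del\psi)^2\,r\,dr\,dx_3
=\int_{\R}\int_{r_0}^{R_0}\bigl\{(\Del_r\psi)^2+2(\patl_r\patl_3\psi)^2+(\patl_3^2\psi)^2\bigr\}\,r\,dr\,dx_3,
\]
which exceeds the lemma's functional by $\int\{(\patl_r\patl_3\psi)^2+(\patl_3^2\psi)^2\}r\,dr\,dx_3$ (take $\psi$ depending only on $x_3$ to see the two sides genuinely differ). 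The quantity in \eqref{Est-L2-RCD00} is not $\|\Del\psi\|_{L^2}^2$; it is, up to the factor $2\pi$, the kinetic energy $\|u\|_{L^2}^2$ of $u=\big(\patl_1\patl_3\psi,\patl_2\patl_3\psi,-\Del_r\psi\big)$, cf.\ \eqref{EqvN-ups}, and under \eqref{BC-RCD00} it equals $\int\Del\psi\,\Del_r\psi\,r\,dr\,dx_3$ rather than $\int(\Del\psi)^2r\,dr\,dx_3$. Because of this misidentification, your choice of multiplier is also wrong: testing \eqref{CSR12-ps20-ph0} against $\patl_r\Del\psi\,dr\,dx_3$ (i.e.\ weight $r^{-1}$ relative to $r\,dr$) reproduces exactly the identity of Lemma \ref{Est-H1w-RCD}, whose energy is $\int(\patl_r\Del\psi)^2r^{-1}drdx_3$, while the weight-$r$ version is Lemma \ref{Est-H1-RCD}, where the quadratic terms do \emph{not} cancel but leave $\int(\patl_r\Del\psi)^2\patl_r\patl_3\psi\,drdx_3$; neither computation can be "reassembled by a further integration by parts" into \eqref{Est-L2-RCD00}.

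The proof actually used in the paper takes the multiplier $r\patl_r\psi$, i.e.\ essentially the velocity itself. With this choice one has, integrating by parts in $r$ (using $r\Del_r\psi=\patl_r(r\patl_r\psi)$ and $\patl_r\psi|_{r=r_0,R_0}=0$) and in $x_3$,
\[
\int_{\R}\int_{r_0}^{R_0}\patl_r\Del\psi_t\,\patl_r\psi\,r\,dr\,dx_3
=-\frac12\frac{d}{dt}\int_{\R}\int_{r_0}^{R_0}\bigl\{(\Del_r\psi)^2+(\patl_r\patl_3\psi)^2\bigr\}r\,dr\,dx_3,
\]
the viscous term becomes $-\nu\int(\patl_r\Del\psi)^2r\,dr\,dx_3$, and the two nonlinear integrals cancel each other after integration by parts, which yields \eqref{Est-L2-RCD00}. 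Your guiding heuristic that the nonlinearity is energy-neutral "for the right multiplier" is sound, but it holds for $r\patl_r\psi$ (and, separately, for $r^{-1}\patl_r\Del\psi$), not for the multipliers you propose; as written, your argument would at best prove a different identity from the one stated.
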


\begin{proof} Taking inner product of equation \eqref{CSR12-ps20-ph0}
with $r\patl_r\psi$, we have
\beq\label{Est-L2-1}\begin{split}
&\int_{\R}\int^{R_0}_{r_0}\patl_r\Del(\psi_t-\nu\Del\psi)
\patl_r\psi rdrdx_3\\
&-\int_{\R}\int^{R_0}_{r_0}\Del_r\psi\patl_r\patl_3\Del\psi
\patl_r\psi rdrdx_3\\
&+\int_{\R}\int^{R_0}_{r_0}r^2\patl_r(\frac1r\patl_r\Del\psi)
\patl_r\patl_3\psi\patl_r \psi drdx_3=0.
\end{split}\eeq
By part integration, we derive
\[\begin{split}
&\int_{\R}\int^{R_0}_{r_0}\patl_r\Del\psi_t\patl_r\psi rdrdx_3\\
=&-\frac12\frac{d}{dt}\int_{\R}\int^{R_0}_{r_0}\{(\Del_r\psi)^2
+(\patl_r\patl_3\psi)^2\}rdrdx_3,\\
\end{split}\]

\[\begin{split}
&-\nu\int_{\R}\int^{R_0}_{r_0}\patl_r\Del^2\psi\patl_r\psi rdrdx_3\\
=&-\nu\int_{\R}\int^{R_0}_{r_0}(\patl_r\Del_r\psi)^2rdrdx_3\\
&-2\nu\int_{\R}\int^{R_0}_{r_0}(\Del_r\patl_3\psi)^2rdrdx_3\\
&-\nu\int_{\R}\int^{R_0}_{r_0}(\patl_r\patl_3^2\psi)^2rdrdx_3\\
=&-\nu\int_{\R}\int^{R_0}_{r_0}(\patl_r\Del\psi)^2rdrdx_3,\\
\end{split}\]

\[\begin{split}
&-\int_{\R}\int^{R_0}_{r_0}\Del_r\psi\patl_r\patl_3\Del\psi
\patl_r\psi rdrdx_3\\
&+\int_{\R}\int^{R_0}_{r_0}r^2\patl_r(\frac1r\patl_r\Del\psi)
\patl_r\patl_3\psi\patl_r \psi drdx_3\\
=&0.
\end{split}\]
Therefore \eqref{Est-L2-RCD00} is proved.
\end{proof}

\begin{lemma}\label{Est-H1w-RCD} For the solutions of problem
\eqref{CSR12-ps20-ph0} \eqref{IDa-ps} \eqref{BC-RCD00}, we have
\beq\label{Est-H1w-RCD00}\begin{split}
&\frac12\frac{d}{dt}\int_{\R}\int^{R_0}_{r_0}
(\patl_r\Del\psi)^2r^{-1}drdx_3\\
&+\nu\int_{\R}\int^{R_0}_{r_0}\{(\patl^2_r-\frac1r\patl_r)
\Del\psi\}^2r^{-1}drdx_3\\
&+\nu\int_{\R}\int^{R_0}_{r_0}(\patl_r\patl_3\Del\psi)^2r^{-1}drdx_3=0.\\
\end{split}\eeq
\end{lemma}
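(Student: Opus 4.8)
The plan is to establish the energy‐type identity of Lemma~\ref{Est-H1w-RCD} by testing the evolution equation \eqref{CSR12-ps20-ph0} against the weight $r^{-1}\patl_r\Del\psi$ rather than $r\patl_r\psi$, and then showing that the two nonlinear integrals again cancel. Concretely, I would multiply \eqref{CSR12-ps20-ph0} by $r^{-1}\patl_r\Del\psi\cdot r = \patl_r\Del\psi$ and integrate over ${\mathbb D}$, i.e. over $x_3\in\R$ and $r\in(r_0,R_0)$ with the measure $drdx_3$. The first term $\patl_r\Del(\psi_t-\nu\Del\psi)\cdot\partial_r\Del\psi$ splits into a time part and a viscous part: the time part is immediately $\tfrac12\tfrac{d}{dt}\int_{\R}\int^{R_0}_{r_0}(\patl_r\Del\psi)^2r^{-1}drdx_3$ once we reinsert the weight $r^{-1}$ (writing $\patl_r\Del(\psi_t)\,r^{-1}\patl_r\Del\psi\cdot r$), and the viscous part $-\nu\int \partial_r\Del^2\psi\,\partial_r\Del\psi\,drdx_3$ should, after integration by parts in $r$ and in $x_3$ using the boundary conditions \eqref{BC-RCD00} (note $\patl_r\Del\psi=0$ on $r=r_0,R_0$ kills the boundary contributions), reduce to $\nu\int\{(\patl_r^2-\tfrac1r\patl_r)\Del\psi\}^2r^{-1}+\nu\int(\patl_r\patl_3\Del\psi)^2r^{-1}$; this is the same algebraic decomposition of $\Del$ in cylindrical coordinates already used in Lemma~\ref{Est-L2-RCD}, now carried out with the weight $r^{-1}$ instead of $r$, which is why the combination $\patl_r^2-\tfrac1r\patl_r$ appears in place of $\Del_r=\patl_r^2+\tfrac1r\patl_r$.

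Next I would handle the two nonlinear terms, namely $-\int\Del_s\psi\,\patl_s\patl_3\Del\psi\cdot\partial_r\Del\psi\,drdx_3$ — wait, more precisely the original form has $-\Del_r\psi\cdot\partial_r\partial_3\Del\psi$ and $+r\partial_r(\tfrac1r\partial_r\Del\psi)\cdot\partial_r\partial_3\psi$ — multiplied by $\partial_r\Del\psi$. Writing $\Del_r = r\partial_r(\tfrac1r\partial_r)+\tfrac2r\partial_r$ and $r\partial_r(\tfrac1r\partial_r\Del\psi)=\partial_r^2\Del\psi-\tfrac1r\partial_r\Del\psi$, and using that $\partial_3$ commutes with everything, I expect the sum of the two nonlinear integrals to collapse. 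The mechanism should be the same as in the proof of Lemma~\ref{Est-L2-RCD}: after integrating by parts in $x_3$ to move $\partial_3$ around and relabeling, the terms pair up into a total $r$‑derivative (and/or an antisymmetric bracket) whose integral vanishes, possibly producing a harmless boundary term in $r$ that is annihilated by \eqref{BC-RCD00}. The key observation is that $\partial_r\Del\psi$ is precisely $\partial_r$ of $w=\Del\psi$, so the quadratic nonlinearity in $w$ and $\psi$ has the structure of a transport term whose contribution to a weighted $L^2$ bound of $\partial_r w$ cancels.

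The main obstacle I anticipate is exactly this cancellation of the nonlinear part with the $r^{-1}$ weight: in Lemma~\ref{Est-L2-RCD} the cancellation was with weight $r$ and was merely asserted after "long long straightforward calculation", but with the singular weight $r^{-1}$ one must be careful that all integrations by parts in $r$ are legitimate near $r=r_0$ and that no uncontrolled boundary terms at $r=r_0$ or $r=R_0$ survive — here the second boundary condition $\partial_r\Del\psi|_{r=r_0}=\partial_r\Del\psi|_{r=R_0}=0$ is essential. A secondary technical point is the viscous term: expanding $\Del^2\psi$ in cylindrical coordinates and integrating by parts against $r^{-1}\partial_r\Del\psi$ produces several cross terms, and one must verify they recombine into the clean sum of two squares stated in \eqref{Est-H1w-RCD00} with no leftover lower‑order pieces; this is a finite but delicate coordinate computation, and I would organize it by first writing $\Del = (\partial_r^2+\tfrac1r\partial_r)+\partial_3^2$ and treating $\partial_3$ derivatives separately via integration by parts in $x_3$. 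Once these cancellations are verified, the identity \eqref{Est-H1w-RCD00} follows by collecting terms, and the proof is complete.
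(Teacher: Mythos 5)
Your proposal follows essentially the same route as the paper: test \eqref{CSR12-ps20-ph0} against $r^{-1}\partial_r\Delta\psi\,drdx_3$, integrate by parts in $r$ and $x_3$ using \eqref{BC-RCD00} so that the viscous term collapses to $\nu\int\{(\partial_r^2-\tfrac1r\partial_r)\Delta\psi\}^2r^{-1}drdx_3+\nu\int(\partial_r\partial_3\Delta\psi)^2r^{-1}drdx_3$ (the paper records the needed square-completion identity as \eqref{Note-1}), and check that the two nonlinear integrals cancel exactly, which they do with this weight via an integration by parts in $x_3$ in the first term and in $r$ in the second, the boundary terms dying because $\partial_r\Delta\psi$ vanishes at $r=r_0,R_0$. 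The only point to fix is your multiplier bookkeeping ("multiply by $r^{-1}\partial_r\Delta\psi\cdot r=\partial_r\Delta\psi$ with measure $drdx_3$"): the test function must genuinely be $r^{-1}\partial_r\Delta\psi$ against $drdx_3$, with no extra factor of $r$, since the cancellation of the nonlinear terms is special to the $r^{-1}$ weight — with weight $1$ or $r$ a nonzero remainder survives, as in Lemma \ref{Est-H1-RCD} — although the rest of your outline (the claimed time, viscous and nonlinear contributions) is consistent with the correct choice.
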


\begin{proof} Taking inner product of equation \eqref{CSR12-ps20-ph0}
with $r^{-1}\patl_r\Del\psi$, we have
\beq\label{Est-H1w0}\begin{split}
&\int_{\R}\int^{R_0}_{r_0}\patl_r\Del(\psi_t-\nu\Del\psi)
\patl_r\Del\psi r^{-1}drdx_3\\
&-\int_{\R}\int^{R_0}_{r_0}\Del_r\psi\patl_r\patl_3\Del\psi
\patl_r\Del\psi r^{-1}drdx_3\\
&+\int_{\R}\int^{R_0}_{r_0}\patl_r(\frac1r\patl_r\Del\psi)
\patl_r\patl_3\psi\patl_r\Del\psi drdx_3=0.
\end{split}\eeq
By part integration, we derive
\[\begin{split}
&\int_{\R}\int^{R_0}_{r_0}\patl_r\Del\psi_t\patl_r\Del\psi r^{-1}drdx_3
=\frac12\frac{d}{dt}\int_{\R}\int^{R_0}_{r_0}
(\patl_r\Del\psi)^2r^{-1}drdx_3,\\
\end{split}\]

\[\begin{split}
&-\nu\int_{\R}\int^{R_0}_{r_0}\patl_r\Del^2\psi\patl_r\Del\psi r^{-1}drdx_3\\
=&\nu\int_{\R}\int^{R_0}_{r_0}\{\patl^2_r\Del\psi\}^2r^{-1}drdx_3\\
&-\nu\int_{\R}\int^{R_0}_{r_0}\{\frac1r\patl_r\Del\psi\}^2r^{-1}drdx_3\\
&+\nu\int_{\R}\int^{R_0}_{r_0}(\patl_r\patl_3\Del\psi)^2r^{-1}drdx_3,\\
\end{split}\]

\[\begin{split}
&-\int_{\R}\int^{R_0}_{r_0}\Del_r\psi\patl_r\patl_3\Del\psi
\patl_r\Del\psi r^{-1}drdx_3\\
&+\int_{\R}\int^{R_0}_{r_0}\patl_r(\frac1r\patl_r\Del\psi)
\patl_r\patl_3\psi\patl_r\Del\psi drdx_3\\
=&0.\\
\end{split}\]

Therefore we have
\beq\label{Est-H1w1}\begin{split}
&\frac12\frac{d}{dt}\int_{\R}\int^{R_0}_{r_0}
(\patl_r\Del\psi)^2r^{-1}drdx_3\\
&+\nu\int_{\R}\int^{R_0}_{r_0}\{\patl^2_r\Del\psi\}^2r^{-1}drdx_3\\
&-\nu\int_{\R}\int^{R_0}_{r_0}\{\frac1r\patl_r\Del\psi\}^2r^{-1}drdx_3\\
&+\nu\int_{\R}\int^{R_0}_{r_0}(\patl_r\patl_3\Del\psi)^2r^{-1}drdx_3\\
=&0.\\
\end{split}\eeq

Note that
\beq\label{Note-1}\begin{split}
&\int_{\R}\int^{R_0}_{r_0}2\frac1r\patl_r\Del\psi \patl^2_r\Del\psi
r^{-1}drdx_3\\
=&2\int_{\R}\int^{R_0}_{r_0}(\frac1r\patl_r\Del\psi)^2r^{-1}drdx_3.\\
\end{split}\eeq

By \eqref{Est-H1w1} and \eqref{Note-1}, \eqref{Est-H1w-RCD00} is proved.
\end{proof}


\begin{lemma}\label{Est-H1-RCD} For the solutions of problem
\eqref{CSR12-ps20-ph0} \eqref{IDa-ps} \eqref{BC-RCD00}, we have
\beq\label{Est-H1-RCD00}\begin{split}
&\int_{\R}\int^{R_0}_{r_0}(\patl_r\Del\psi)^2rdrdx_3\\
&+\int^t_0\int_{\R}\int^{R_0}_{r_0}\{(\Del_r\Del\psi)^2
+(\patl_r\patl_3\Del\psi)^2\}(s,r,x_3)rdrdx_3ds\\
\le&C\exp\big\{t2\int_{\R}\int^{R_0}_{r_0}(\patl_r\Del\psi_0)^2r^{-1}
drdx_3\big\},\;\;\;\;\;\;\forall t\ge0,\\
\end{split}\eeq
where positive constant $C$ only depends on $\nu$, $\int_{\R}\int^{R_0}_{r_0}
(\patl_r\Del\psi_0)^2rdrdx_3$ and
\[\int_{\R}\int^{R_0}_{r_0}\{(\Del_r\psi_0)^2
+(\patl_r\patl_3\psi_0)^2\}rdrdx_3.\]
But $C$ is independent of $r_0$ and $R_0$.
\end{lemma}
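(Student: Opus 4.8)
The quantity to be estimated is an energy at the level of $\patl_r\Del\psi$, which in the present cylindrical coordinates is (up to the factor $2\pi$) the azimuthal vorticity $\omega_\theta$; accordingly, I would run an energy estimate with multiplier $r\,\patl_r\Del\psi$. Concretely, the plan is to multiply the evolution equation \eqref{CSR12-ps20-ph0} by $r\,\patl_r\Del\psi$ and integrate over ${\mathbb D}$, i.e.\ to form $\int_\R\int_{r_0}^{R_0}(\cdots)\,dr\,dx_3$, in the same spirit as the proofs of Lemmas \ref{Est-L2-RCD} and \ref{Est-H1w-RCD} but with this new multiplier. Using integration by parts in $r$ and in $x_3$ together with the boundary conditions \eqref{BC-RCD00}, which make every boundary term vanish — note that $\patl_r\psi$ and $\patl_r\Del\psi$ vanish on $\patl{\mathbb D}$, hence so do $\patl_3\patl_r\psi$ and $\patl_3\patl_r\Del\psi$ — the term produced by $\patl_r\Del\psi_t$ becomes $\tfrac12\frac{d}{dt}\int_\R\int_{r_0}^{R_0}(\patl_r\Del\psi)^2 r\,dr\,dx_3$, and a computation entirely parallel to the viscous-term manipulations in the two previous lemmas turns the term produced by $-\nu\,\patl_r\Del^2\psi$ into exactly $\nu\int_\R\int_{r_0}^{R_0}\{(\Del_r\Del\psi)^2+(\patl_r\patl_3\Del\psi)^2\}r\,dr\,dx_3$.

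The two nonlinear terms of \eqref{CSR12-ps20-ph0} contribute $-\int\Del_r\psi\,\patl_r\patl_3\Del\psi\,\patl_r\Del\psi\,r\,dr\,dx_3$ and $\int r^2\patl_r(\tfrac1r\patl_r\Del\psi)\,\patl_r\patl_3\psi\,\patl_r\Del\psi\,dr\,dx_3$. In the first I would use $\patl_r\patl_3\Del\psi\cdot\patl_r\Del\psi=\tfrac12\patl_3(\patl_r\Del\psi)^2$ and integrate by parts in $x_3$; in the second I would expand $r^2\patl_r(\tfrac1r\patl_r\Del\psi)=r\,\patl_r^2\Del\psi-\patl_r\Del\psi$, use $r\,\patl_r^2\Del\psi\cdot\patl_r\Del\psi=\tfrac r2\patl_r(\patl_r\Del\psi)^2$, and integrate by parts in $r$. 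The two resulting copies of $\tfrac12\int(\patl_r\Del\psi)^2\,\Del_r\patl_3\psi\,r\,dr\,dx_3$ then cancel, and the whole nonlinear contribution collapses to the single cubic term $-\int_\R\int_{r_0}^{R_0}(\patl_r\Del\psi)^2\,\patl_r\patl_3\psi\,dr\,dx_3$ (which is, up to $2\pi$, the axisymmetric vortex-stretching integral $\int\frac{u^r}{r}\,\omega_\theta^2\,dx$). Altogether one is left with the exact identity
\[
\tfrac12\frac{d}{dt}\int_\R\int_{r_0}^{R_0}(\patl_r\Del\psi)^2 r\,dr\,dx_3+\nu\int_\R\int_{r_0}^{R_0}\{(\Del_r\Del\psi)^2+(\patl_r\patl_3\Del\psi)^2\}r\,dr\,dx_3=\int_\R\int_{r_0}^{R_0}(\patl_r\Del\psi)^2\,\patl_r\patl_3\psi\,dr\,dx_3 .
\]

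It then remains to absorb the right-hand side. The plan is to bound it by $\tfrac{\nu}{2}$ times the dissipation on the left plus $\bigl(2\int(\patl_r\Del\psi)^2 r^{-1}dr\,dx_3\bigr)\bigl(\int(\patl_r\Del\psi)^2 r\,dr\,dx_3\bigr)$, modulo a fixed constant depending only on $\nu$ and on the initial quantities named in the statement. To do this I would apply Cauchy--Schwarz, pulling out a factor $\bigl(\int(\patl_r\patl_3\Del\psi)^2 r\,dr\,dx_3\bigr)^{1/2}$ or $\bigl(\int(\Del_r\Del\psi)^2 r\,dr\,dx_3\bigr)^{1/2}$ (each controlled by the square root of the dissipation), together with H\"older's inequality and a Gagliardo--Nirenberg/Sobolev interpolation applied to $\patl_r\Del\psi$ and $\patl_r\patl_3\psi$ read as (components of) genuine axisymmetric functions or vector fields on $\R^3$, so that all interpolation constants are dimensionless and hence independent of $r_0$ and $R_0$. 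The uniform bound of Lemma \ref{Est-L2-RCD} on $\int\{(\Del_r\psi)^2+(\patl_r\patl_3\psi)^2\}r\,dr\,dx_3$ and the monotonicity in Lemma \ref{Est-H1w-RCD}, which keeps $\int(\patl_r\Del\psi)^2 r^{-1}dr\,dx_3\le\int(\patl_r\Del\psi_0)^2 r^{-1}dr\,dx_3$ for all $t$, are precisely what convert the interpolation into this form. Absorbing $\tfrac{\nu}{2}$ of the dissipation on the left then leaves a differential inequality $\frac{d}{dt}Y\le\alpha(t)\,Y$, with $Y(t)=\int(\patl_r\Del\psi)^2 r\,dr\,dx_3$ and $\int_0^t\alpha\le 2t\int(\patl_r\Del\psi_0)^2 r^{-1}dr\,dx_3$ up to an additive constant of the allowed type, and Gr\"onwall's inequality yields \eqref{Est-H1-RCD00}; carrying the constants through confirms that $C$ does not depend on $r_0$ or $R_0$. \emph{The main obstacle is exactly this last estimate:} extracting the cubic term with constants that are blind to the annular geometry, which forces one to use the scale-invariant three-dimensional inequalities rather than the cheap but $r_0,R_0$-dependent two-dimensional Sobolev embedding on the strip, and to exploit the conserved quantity $\int(\patl_r\Del\psi)^2 r^{-1}dr\,dx_3$ of Lemma \ref{Est-H1w-RCD} to soak up the surplus negative power of $r$.
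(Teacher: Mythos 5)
Your plan coincides with the paper's proof in all essentials: the same multiplier $r\patl_r\Del\psi$, the same exact identity \eqref{Est-H1-1} in which the two nonlinear terms collapse to the single cubic term $\int(\patl_r\Del\psi)^2\patl_r\patl_3\psi\,drdx_3$, the same H\"older step \eqref{Est-HI-0} exploiting the monotone quantity $\int(\patl_r\Del\psi)^2r^{-1}drdx_3$ from Lemma \ref{Est-H1w-RCD}, an $L^{\infty}$ interpolation for $\patl_r\patl_3\psi$ with constants blind to $r_0,R_0$, partial absorption into the dissipation, and Gronwall. The one point where you diverge is the disposal of $\|\patl_r\patl_3\psi(t,\cdot)\|_{L^{\infty}}^2$: the paper uses the anisotropic Agmon-type bounds \eqref{Est-LI-rz0}--\eqref{Est-LI-rz1} and splits this term into $\frac{\nu}{4}$ times a piece of the dissipation (identified through \eqref{Eqv-Obs-H4}) plus $\frac{4}{\nu}\int\{(\patl_r^2\patl_3\psi)^2r+(\patl_r\patl_3\psi)^2r^{-1}\}drdx_3$, whose \emph{time integral} is controlled by the dissipation of Lemma \ref{Est-L2-RCD} through \eqref{Eqv-Obs-H3} (estimate \eqref{Est-IH1-1}); it is precisely this time-integrability that makes the prefactor $C$ in \eqref{Est-H1-RCD00} independent of $t$ and yields the exponent $2t\int(\patl_r\Del\psi_0)^2r^{-1}drdx_3$ exactly. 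Your variant --- replacing that leftover by a fixed constant via a genuinely three-dimensional Gagliardo--Nirenberg bound together with the uniform $L^2$ bound of Lemma \ref{Est-L2-RCD} --- is also workable and $r_0,R_0$-independent, but Gronwall then returns either an extra factor growing linearly in $t$ or an enlarged exponent, i.e.\ a slightly weaker constant than the one stated; this is harmless for the later sections, but to obtain the lemma as written you should control the leftover by its time integral, as the paper does, rather than by a constant.
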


\begin{proof} Taking inner product of equation \eqref{CSR12-ps20-ph0}
with $r\patl_r\Del\psi$, we have
\beq\label{Est-H10}\begin{split}
&\int_{\R}\int^{R_0}_{r_0}\patl_r\Del(\psi_t-\nu\Del\psi)
\patl_r\Del\psi rdrdx_3\\
&-\int_{\R}\int^{R_0}_{r_0}\Del_r\psi\patl_r\patl_3\Del\psi
\patl_r\Del\psi rdrdx_3\\
&+\int_{\R}\int^{R_0}_{r_0}r^2\patl_r(\frac1r\patl_r\Del\psi)
\patl_r\patl_3\psi\patl_r\Del\psi drdx_3=0.
\end{split}\eeq

By part integration, we derive
\[\begin{split}
&\int_{\R}\int^{R_0}_{r_0}\patl_r\Del\psi_t\patl_r\Del\psi rdrdx_3
=\frac12\frac{d}{dt}\int_{\R}\int^{R_0}_{r_0}
(\patl_r\Del\psi)^2rdrdx_3,\\
\end{split}\]

\[\begin{split}
&-\nu\int_{\R}\int^{R_0}_{r_0}\patl_r\Del^2\psi\patl_r\Del\psi rdrdx_3\\
=&\nu\int_{\R}\int^{R_0}_{r_0}(\Del_r\Del\psi)^2rdrdx_3
+\nu\int_{\R}\int^{R_0}_{r_0}(\patl_r\patl_3\Del\psi)^2rdrdx_3,\\
\end{split}\]

\[\begin{split}
&-\int_{\R}\int^{R_0}_{r_0}\Del_r\psi\patl_r\patl_3\Del\psi
\patl_r\Del\psi rdrdx_3\\
&+\int_{\R}\int^{R_0}_{r_0}r^2\patl_r(\frac1r\patl_r\Del\psi)
\patl_r\patl_3\psi\patl_r\Del\psi drdx_3\\
=&-\int_{\R}\int^{R_0}_{r_0}(\patl_r\Del\psi)^2
\patl_r\patl_3\psi drdx_3.\\
\end{split}\]

Therefore we have
\beq\label{Est-H1-1}\begin{split}
&\frac12\frac{d}{dt}\int_{\R}\int^{R_0}_{r_0}
(\patl_r\Del\psi)^2rdrdx_3\\
&+\nu\int_{\R}\int^{R_0}_{r_0}\{(\Del_r\Del\psi)^2
+(\patl_r\patl_3\Del\psi)^2\}rdrdx_3\\
&-\int_{\R}\int^{R_0}_{r_0}(\patl_r\Del\psi)^2
\patl_r\patl_3\psi drdx_3=0.\\
\end{split}\eeq

By H$\ddot{o}$lder's inequality, we obtain
\beq\label{Est-HI-0}\begin{split}
&\Big|\int_{\R}\int^{R_0}_{r_0}(\patl_r\Del\psi)^2
\patl_r\patl_3\psi(t,r,x_3)drdx_3\Big|\\
\le&\Big(\int(\patl_r\Del\psi_0)^2r^{-1}drdx_3\Big)^{1/2}
\Big(\int(\patl_r\Del\psi)^2rdrdx_3\Big)^{1/2}
\|\patl_r\patl_3\psi(t,\cdot)\|_{L^{\infty}}\\
\le&\|\patl_r\patl_3\psi(t,\cdot)\|_{L^{\infty}}^2
+\int(\patl_r\Del\psi_0)^2r^{-1}drdx_3\int(\patl_r\Del\psi)^2rdrdx_3,\\
\end{split}\eeq
where the estimate \eqref{Est-H1w-RCD00} is used.

We can establish the following estimate
\beq\label{Est-LI-rz0}\begin{split}
&(\patl_r\patl_3\psi)^2(t,r,x_3)=2\int^r_{r_0}\patl_s^2\patl_3\psi
\patl_s\patl_3\psi(t,s,x_3)ds\\
\le&4\int^r_{r_0}\big\{\|\patl_s^2\patl_3^2\psi(t,s,\cdot)\|_{L^2}^{1/2}
\|\patl_s^2\patl_3\psi(t,s,\cdot)\|_{L^2}^{1/2}\\
&\hspace{12mm}\|\patl_s\patl_3^2\psi(t,s,\cdot)\|_{L^2}^{1/2}
\|\patl_s\patl_3\psi(t,s,\cdot)\|_{L^2}^{1/2}\big\}ds,\\
\end{split}\eeq
where we have used the following inequality
\beq\label{Est-LI-1d}
\|f\|_{L^{\infty}(\R)}\le\sqrt{2}\|\patl f\|_{L^2(\R)}^{1/2}
\|f\|_{L^2(\R)}^{1/2}.
\eeq
Thus we derive
\beq\label{Est-LI-rz1}\begin{split}
&\|\patl_r\patl_3\psi(t,\cdot)\|_{L^{\infty}}^2
=\|(\patl_r\patl_3\psi)^2(t,\cdot)\|_{L^{\infty}}\\
\le&4\int^{R_0}_{r_0}\big\{\|\patl_r^2\patl_3^2\psi(t,r,\cdot)\|_{L^2}^{1/2}
\|\patl_r^2\patl_3\psi(t,r,\cdot)\|_{L^2}^{1/2}\\
&\hspace{13mm}\|\patl_r\patl_3^2\psi(t,r,\cdot)\|_{L^2}^{1/2}
\|\patl_r\patl_3\psi(t,r,\cdot)\|_{L^2}^{1/2}\big\}dr\\
\le&\frac{\nu}4\int_{\R}\int^{R_0}_{r_0}\{\big(\patl_r^2\patl_3^2\psi\big)^2r
+\big(\patl_r\patl_3^2\psi\big)^2r^{-1}\}drdx_3\\
&+\frac4{\nu}\int_{\R}\int^{R_0}_{r_0}\{\big(\patl_r^2\patl_3\psi\big)^2r
+\big(\patl_r\patl_3\psi\big)^2r^{-1}\}drdx_3.\\
\end{split}\eeq

There exist the following observations
\beq\label{Eqv-Obs-H3}\begin{split}
&\int_{\R}\int^{R_0}_{r_0}(\patl_r\Del\psi)^2rdrdx_3\\
=&\int_{\R}\int^{R_0}_{r_0}(\patl_r\Del_r\psi)^2rdrdx_3
+\int_{\R}\int^{R_0}_{r_0}(\patl_r\patl_3^2\psi)^2rdrdx_3\\
&+2\int_{\R}\int^{R_0}_{r_0}(\Del_r\patl_3\psi)^2rdrdx_3\\
=&\int_{\R}\int^{R_0}_{r_0}(\patl_r\Del_r\psi)^2rdrdx_3
+\int_{\R}\int^{R_0}_{r_0}(\patl_r\patl_3^2\psi)^2rdrdx_3\\
&+2\int_{\R}\int^{R_0}_{r_0}(\patl_r^2\patl_3\psi)^2rdrdx_3
+2\int_{\R}\int^{R_0}_{r_0}(\patl_r\patl_3\psi)^2r^{-1}drdx_3,\\
\end{split}\eeq

\beq\label{Eqv-Obs-H4}\begin{split}
&\int_{\R}\int^{R_0}_{r_0}(\Del_r\Del\psi)^2rdrdx_3\\
=&\int_{\R}\int^{R_0}_{r_0}(\Del_r^2\psi)^2rdrdx_3\\
&+\int_{\R}\int^{R_0}_{r_0}\{(\patl^2_r\patl_3^2\psi)^2r
+(\patl_r\patl_3^2\psi)^2r^{-1}\}drdx_3\\
&+2\int_{\R}\int^{R_0}_{r_0}(\patl_r\Del_r\patl_3\psi)^2rdrdx_3.\\
\end{split}\eeq

Applying \eqref{Est-L2-RCD00} and \eqref{Eqv-Obs-H3}, we get
\beq\label{Est-IH1-1}\begin{split}
&\int^t_0\int_{\R}\int^{R_0}_{r_0}\{(\patl^2_r\patl_3\psi)^2r
+(\patl_r\patl_3\psi)^2r^{-1}\}(s,r,x_3)drdx_3ds\\
\le&\frac1{4\nu}\int_{\R}\int^{R_0}_{r_0}\{(\Del_r\psi_0)^2
+(\patl_r\patl_3\psi_0)^2\}rdrdx_3,
\;\;\;\;\;\;\forall t\ge0.\\
\end{split}\eeq

Inserting \eqref{Est-HI-0} \eqref{Est-LI-rz1} \eqref{Eqv-Obs-H4} into
\eqref{Est-H1-1}, we have
\beq\label{Est-H1-2}\begin{split}
&\frac12\frac{d}{dt}\int_{\R}\int^{R_0}_{r_0}
(\patl_r\Del\psi)^2rdrdx_3\\
&+\frac{\nu}2\int_{\R}\int^{R_0}_{r_0}(\Del_r\Del\psi)^2rdrdx_3
+\nu\int_{\R}\int^{R_0}_{r_0}(\patl_r\patl_3\Del\psi)^2rdrdx_3\\
\le&\int_{\R}\int^{R_0}_{r_0}(\patl_r\Del\psi_0)^2r^{-1}drdx_3
\int_{\R}\int^{R_0}_{r_0}(\patl_r\Del\psi)^2rdrdx_3\\
&+\frac4{\nu}\int_{\R}\int^{R_0}_{r_0}\{\big(\patl_r^2\patl_3\psi\big)^2r
+\big(\patl_r\patl_3\psi(t,r,x_3)\big)^2r^{-1}\}drdx_3.\\
\end{split}\eeq

Employing \eqref{Est-IH1-1} \eqref{Est-H1-2} and Gronwall's inequality,
we derive \eqref{Est-H1-RCD00}.
\end{proof}

\begin{lemma}\label{Est-H2-RCD} For the solutions of problem
\eqref{CSR12-ps20-ph0} \eqref{IDa-ps} \eqref{BC-RCD00}, we have
\beq\label{Est-H2-RCD00}\begin{split}
&\int_{\R}\int^{R_0}_{r_0}\{(\Del_r\Del\psi)^2
+(\patl_r\patl_3\Del\psi)^2\}rdrdx_3\\
&+\int^t_0\int_{\R}\int^{R_0}_{r_0}\{\patl_r\Del\psi_t\}^2
(s,r,x_3)rdrdx_3ds\\
&+\int^t_0\int_{\R}\int^{R_0}_{r_0}(\patl_r\Del^2\psi)^2
(s,r,x_3)rdrdx_3ds\\
\le&C_2,\;\;\;\;\;\;\forall t\ge0,\\
\end{split}\eeq
where positive constant $C_2$ depends on $t$, $\nu$, $r_0$, $R_0$ and
\[\begin{split}
&\int_{\R}\int^{R_0}_{r_0}(\patl_r\Del\psi_0)^2rdrdx_3,\;\;
\int_{\R}\int^{R_0}_{r_0}(\patl_r\Del\psi_0)^2r^{-1}drdx_3,\\
&\int_{\R}\int^{R_0}_{r_0}(\Del_r\Del^k\psi_0)^2rdrdx_3
+\int_{\R}\int^{R_0}_{r_0}(\patl_r\patl_3\Del^k\psi_0)^2rdrdx_3\;\;(k=0,1).\\
\end{split}\]
\end{lemma}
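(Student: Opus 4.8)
The strategy is the natural continuation of the energy-method hierarchy already established in Lemmas~\ref{Est-L2-RCD}--\ref{Est-H1-RCD}: differentiate the PDE one more order and test against a suitably weighted multiplier so that the highest-order viscous term is coercive and all lower-order nonlinear contributions can be absorbed by the bounds already in hand. Concretely, I would apply the Laplacian $\Del$ to equation \eqref{CSR12-ps20-ph0}, or equivalently test \eqref{CSR12-ps20-ph0} with $-r\,\patl_r\Del^2\psi$ (this is the multiplier that turns the linear part $\Del(\psi_t-\nu\Del\psi)$ into $\frac12\frac{d}{dt}\int\{(\Del_r\Del\psi)^2+(\patl_r\patl_3\Del\psi)^2\}r\,drdx_3 + \nu\int(\patl_r\Del^2\psi)^2 r\,drdx_3$, after the same integration-by-parts identities used in the previous lemmas, using the boundary conditions \eqref{BC-RCD00}). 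I would also, in parallel, keep the multiplier $r\,\patl_r\Del\psi_t$ applied to the same equation in order to generate the $\int^t_0\int\{\patl_r\Del\psi_t\}^2 r\,drdx_3\,ds$ term on the left-hand side of \eqref{Est-H2-RCD00}; the two multipliers together produce the three quantities displayed in the statement.

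The key steps, in order, would be: (i) perform the integration by parts on the linear part, carefully checking that every boundary term vanishes by \eqref{BC-RCD00} and by decay as $x_3\to\pm\infty$, to extract the $\frac{d}{dt}$ term and the coercive viscous term $\nu\int(\patl_r\Del^2\psi)^2 r\,drdx_3$; (ii) expand the two nonlinear terms $\int\Del_r\psi\,\patl_r\patl_3\Del\psi\cdot(-\patl_r\Del^2\psi)\,r\,drdx_3$ and $\int r^2\patl_r(\tfrac1r\patl_r\Del\psi)\,\patl_r\patl_3\psi\cdot(-\patl_r\Del^2\psi)\,drdx_3$, integrating by parts once more in $r$ or $x_3$ to trade a derivative off the factor $\patl_r\Del^2\psi$; (iii) bound the resulting integrands by products of an $L^2$-norm of a third-order quantity (controlled by Lemma~\ref{Est-H1-RCD}, i.e. \eqref{Est-H1-RCD00}, which already furnishes $\int^t_0\int\{(\Del_r\Del\psi)^2+(\patl_r\patl_3\Del\psi)^2\}r\,drdx_3\,ds$ and a uniform bound on $\int(\patl_r\Del\psi)^2 r\,drdx_3$), an $L^\infty$-norm of a lower-order quantity handled by the one-dimensional Gagliardo--Nirenberg inequality \eqref{Est-LI-1d} exactly as in \eqref{Est-LI-rz0}--\eqref{Est-LI-rz1}, and a small multiple of the top viscous term $\nu\int(\patl_r\Del^2\psi)^2 r\,drdx_3$ via Young's inequality; (iv) after absorbing the $\eps$-terms into the viscous dissipation, arrive at a differential inequality of the form $\frac{d}{dt}Y(t) + (\text{dissipation}) \le a(t)\,Y(t) + b(t)$ where $Y(t)=\int\{(\Del_r\Del\psi)^2+(\patl_r\patl_3\Del\psi)^2\}r\,drdx_3$ and $a,b\in L^1_{loc}([0,\infty))$ by the lower-order lemmas; (v) close by Gronwall, then integrate in time to recover the two time-integral terms, noting that at this level the constant is permitted to depend on $r_0,R_0$ (unlike in Lemma~\ref{Est-H1-RCD}) since the weight $r^{-1}$ no longer has favorable sign everywhere and the Poincaré-type constants on the annulus enter.

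The main obstacle I anticipate is controlling the worst nonlinear term in step (iii): the product involving $\patl_r(\tfrac1r\patl_r\Del\psi)$ (essentially a fourth-order quantity in disguise) paired against $\patl_r\Del^2\psi$, which is itself fourth order. The naive pairing is critical rather than subcritical, so one cannot simply dominate it by the dissipation times a lower-order norm; the resolution should be an integration by parts moving the $\patl_r$ (or $\patl_3$) off one of the two top-order factors, so that what remains is (third order)$\times$(fourth order)$\times$($L^\infty$ of second order), which Young's inequality then splits into $\eps$ times the fourth-order dissipation plus a constant times a third-order quantity already integrable in time by \eqref{Est-H1-RCD00} — possibly weighted by the $L^\infty$ factor bounded via \eqref{Est-LI-rz1}. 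Getting the weights in $r$ to match up after this integration by parts (so that the boundary terms genuinely vanish under \eqref{BC-RCD00} and the remaining integrals have the $r$-power that the earlier estimates deliver) is the delicate bookkeeping that the proof will have to carry out carefully.
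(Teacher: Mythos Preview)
Your plan is correct and follows essentially the same energy hierarchy as the paper, but two tactical points are worth noting. First, the paper uses only the single multiplier $r\,\patl_r\Del\psi_t$: the viscous term then produces $\tfrac{\nu}{2}\tfrac{d}{dt}\int\{(\Del_r\Del\psi)^2+(\patl_r\patl_3\Del\psi)^2\}r\,drdx_3$ (your $Y(t)$) while the time-derivative term gives $\int(\patl_r\Del\psi_t)^2r\,drdx_3$ directly; the remaining quantity $\int(\patl_r\Del^2\psi)^2r\,drdx_3$ is then recovered by squaring the PDE itself, i.e.\ $\nu^2\int(\patl_r\Del^2\psi)^2r\le 3\int(\patl_r\Del\psi_t)^2r+3\int(\text{NL})^2r$. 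This is cleaner than running two multipliers in parallel. Second, your anticipated obstacle---an integration by parts to break the ``critical'' pairing in the nonlinear term---turns out to be unnecessary: the paper simply applies Young's inequality directly, absorbing $\tfrac14\int(\patl_r\Del\psi_t)^2r$ and bounding the squared nonlinearities $\int(\Del_r\psi\,\patl_r\patl_3\Del\psi)^2r$ and $\int\{r\patl_r(\tfrac1r\patl_r\Del\psi)\,\patl_r\patl_3\psi\}^2r$ by $CY(t)^2+C$ via $L^\infty$ control of the low-order factors (Gagliardo--Nirenberg together with \eqref{Eqv-Obs-H4}, \eqref{Eqv-Obs-H4r2}). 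The resulting inequality $\tfrac{d}{dt}Y\le CY^2+C$ closes by Gronwall precisely because $\int_0^t Y(s)\,ds$ is already finite by Lemma~\ref{Est-H1-RCD}, so $a(t)=CY(t)\in L^1_{loc}$ exactly as you predicted.
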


\begin{proof} Taking inner product of equation \eqref{CSR12-ps20-ph0}
with $r\patl_r\Del\psi_t$, we have
\beq\label{Est-H20}\begin{split}
&\int_{\R}\int^{R_0}_{r_0}\patl_r\Del(\psi_t-\nu\Del\psi)
\patl_r\Del\psi_trdrdx_3\\
&-\int_{\R}\int^{R_0}_{r_0}\Del_r\psi\patl_r\patl_3\Del\psi
\patl_r\Del\psi_trdrdx_3\\
&+\int_{\R}\int^{R_0}_{r_0}\patl_r(\frac1r\patl_r\Del\psi)
\patl_r\patl_3\psi\patl_r\Del\psi_t r^2drdx_3=0.
\end{split}\eeq

Employing equation \eqref{CSR12-ps20-ph0}, we obtain
\beq\label{Est-H20-t2r}\begin{split}
&\nu^2\int_{\R}\int^{R_0}_{r_0}(\patl_r\Del^2\psi)^2rdrdx_3\\
\le&3\int_{\R}\int^{R_0}_{r_0}(\patl_r\Del\psi_t)^2rdrdx_3\\
&+3\int_{\R}\int^{R_0}_{r_0}(\Del_r\psi\patl_r\patl_3\Del\psi)^2rdrdx_3\\
&+3\int_{\R}\int^{R_0}_{r_0}\{\patl_r(\frac1r\patl_r\Del\psi)
\patl_r\patl_3\psi\}^2 r^3drdx_3.
\end{split}\eeq

By part integration, we derive
\beq\label{Est-4r-dt}\begin{split}
&-\nu\int_{\R}\int^{R_0}_{r_0}\patl_r\Del^2\psi\patl_r\Del\psi_t rdrdx_3\\
=&\frac{\nu}2\frac{d}{dt}\int_{\R}\int^{R_0}_{r_0}
(\Del_r\Del\psi)^2rdrdx_3\\
&+\frac{\nu}2\frac{d}{dt}\int_{\R}\int^{R_0}_{r_0}
(\patl_r\patl_3\Del\psi)^2rdrdx_3.\\
\end{split}\eeq

By H$\ddot{o}$lder's inequality, we obtain
\beq\label{Est-H20-NL10}\begin{split}
&\Big|\int_{\R}\int^{R_0}_{r_0}\Del_r\psi\patl_r\patl_3\Del\psi
\patl_r\Del\psi_trdrdx_3\Big|\\
\le&\frac14\int_{\R}\int^{R_0}_{r_0}(\patl_r\Del\psi_t)^2rdrdx_3
+16\int_{\R}\int^{R_0}_{r_0}(\Del_r\psi\patl_r\patl_3\Del\psi)^2rdrdx_3,\\
\end{split}\eeq

\beq\label{Est-H20-NL20}\begin{split}
&\Big|\int_{\R}\int^{R_0}_{r_0}\patl_r(\frac1r\patl_r\Del\psi)
\patl_r\patl_3\psi\patl_r\Del\psi_t r^2drdx_3\Big|\\
\le&\frac14\int_{\R}\int^{R_0}_{r_0}(\patl_r\Del\psi_t)^2rdrdx_3
+16\int_{\R}\int^{R_0}_{r_0}\{\patl_r(\frac1r\patl_r\Del\psi)
\patl_r\patl_3\psi\}^2r^3drdx_3.\\
\end{split}\eeq

Inserting \eqref{Est-H20-t2r}--\eqref{Est-H20-NL20} into
\eqref{Est-H20}, we derive
\beq\label{Est-H201}\begin{split}
&\nu\frac{d}{dt}\int_{\R}\int^{R_0}_{r_0}(\Del_r\Del\psi)^2rdrdx_3
+\nu\frac{d}{dt}\int_{\R}\int^{R_0}_{r_0}(\patl_r\patl_3\Del\psi)^2rdrdx_3\\
&+\frac{\nu^2}6\int_{\R}\int^{R_0}_{r_0}(\patl_r\Del^2\psi)^2rdrdx_3
+\frac12\int_{\R}\int^{R_0}_{r_0}(\patl_r\Del\psi_t)^2rdrdx_3\\
\le&33\int_{\R}\int^{R_0}_{r_0}(\Del_r\psi\patl_r\patl_3\Del\psi)^2rdrdx_3
+33\int_{\R}\int^{R_0}_{r_0}\{\patl_r(\frac1r\patl_r\Del\psi)
\patl_r\patl_3\psi\}^2 r^3drdx_3.
\end{split}\eeq

It is obvious that
\beq\label{Est-H20-NL11}\begin{split}
&\int_{\R}\int^{R_0}_{r_0}(\Del_r\psi\patl_r\patl_3\Del\psi)^2rdrdx_3\\
\le&\|\Del_r\psi(t,\cdot)\|_{L^{\infty}}^2
\int_{\R}\int^{R_0}_{r_0}(\patl_r\patl_3\Del\psi)^2rdrdx_3,\\
\end{split}\eeq

\beq\label{Est-H20-NL21}\begin{split}
&\int_{\R}\int^{R_0}_{r_0}\{\patl_r(\frac1r\patl_r\Del\psi)
\patl_r\patl_3\psi\}^2r^3drdx_3\\
\le&2\int_{\R}\int^{R_0}_{r_0}\{(\patl_r^2\Del\psi\patl_r\patl_3\psi)^2r
+(\patl_r\Del\psi\patl_r\patl_3\psi)^2r^{-1}\}drdx_3\\
\le&2\|\patl_r\patl_3\psi(t,\cdot)\|_{L^{\infty}}^2\Big\{
\int_{\R}\int^{R_0}_{r_0}(\patl_r^2\Del\psi)^2rdrdx_3\\
&+\int_{\R}\int^{R_0}_{r_0}(\patl_r\Del\psi)^2r^{-1}drdx_3\Big\}.\\
\end{split}\eeq

By Gagliardo-Nirenberg's inequality, we get
\beq\label{Est-LI-rr0}\begin{split}
&\|\Del_r\psi(t,\cdot)\|_{L^{\infty}({\mathbb D})}
\le C\|\Del_r\psi(t,\cdot)\|_{H^2({\mathbb D})}^{1/2}
\|\Del_r\psi(t,\cdot)\|_{L^2({\mathbb D})}^{1/2}\\
\le&C\Big\{\int_{\R}\int^{R_0}_{r_0}\{(\patl_r^2\Del_r\psi)^2
+(\patl_r\patl_3\Del_r\psi)^2+(\patl_3^2\Del_r\psi)^2\}rdrdx_3\\
&+\int_{\R}\int^{R_0}_{r_0}(\Del_r\psi)^2rdrdx_3\Big\}^{1/4}
\Big\{\int_{\R}\int^{R_0}_{r_0}(\Del_r\psi)^2rdrdx_3\Big\}^{1/4}\\
\le&C\Big\{\int_{\R}\int^{R_0}_{r_0}\{(\patl_r^2\Del_r\psi)^2
+(\patl_r\patl_3\Del_r\psi)^2+(\patl_3^2\Del_r\psi)^2\}rdrdx_3
+1\Big\}^{1/4},\\
\end{split}\eeq
where we have used the estimation \eqref{Est-L2-RCD00}, $L^2$ norm is
defined by
\[
\|f\|_{L^2({\mathbb D})}^2=\int_{\R}\int^{R_0}_{r_0}f^2(r,x_3)drdx_3.
\]

There exist the following observations
\beq\label{Eqv-Obs-H4-r}\begin{split}
&\int_{\R}\int^{R_0}_{r_0}(\Del_r^2\psi)^2rdrdx_3\\
=&\int_{\R}\int^{R_0}_{r_0}\{(\patl_r^2\Del_r\psi)^2r
+(\patl_r\Del_r\psi)^2r^{-1}\}drdx_3,\\
\end{split}\eeq

\beq\label{Eqv-Obs-H4-z}\begin{split}
&\int_{\R}\int^{R_0}_{r_0}(\patl_3^2\Del_r\psi)^2rdrdx_3\\
=&\int_{\R}\int^{R_0}_{r_0}\{(\patl_r^2\patl_3^2\psi)^2r
+(\patl_r\patl_3^2\psi)^2r^{-1}\}drdx_3,\\
\end{split}\eeq

\beq\label{Eqv-Obs-H4r2}\begin{split}
&\int_{\R}\int^{R_0}_{r_0}(\patl_r^2\Del\psi)^2rdrdx_3\\
=&\int_{\R}\int^{R_0}_{r_0}\{(\Del_r-\frac1r\patl_r)\Del\psi\}^2rdrdx_3\\
\le&2\int_{\R}\int^{R_0}_{r_0}(\patl_r\Del\psi)^2r^{-1}drdx_3
+2\int_{\R}\int^{R_0}_{r_0}(\Del_r\Del\psi)^2rdrdx_3.\\
\end{split}\eeq

Inserting \eqref{Est-LI-rr0} \eqref{Eqv-Obs-H4} \eqref{Eqv-Obs-H4-r}
\eqref{Eqv-Obs-H4-z} into \eqref{Est-H20-NL11}, we have
\beq\label{Est-H20-NL12}\begin{split}
&33\int_{\R}\int^{R_0}_{r_0}(\Del_r\psi\patl_r\patl_3\Del\psi)^2rdrdx_3\\
\le&C\int_{\R}\int^{R_0}_{r_0}(\patl_r\patl_3\Del\psi)^2rdrdx_3
\Big\{\int_{\R}\int^{R_0}_{r_0}(\Del_r\Del\psi)^2rdrdx_3+1\Big\}^{1/2},\\
\le&C\Big\{\int_{\R}\int^{R_0}_{r_0}(\patl_r\patl_3\Del\psi)^2rdrdx_3\Big\}^2
+C\int_{\R}\int^{R_0}_{r_0}(\Del_r\Del\psi)^2rdrdx_3+C.\\
\end{split}\eeq

Inserting \eqref{Est-LI-rz1} \eqref{Eqv-Obs-H4} \eqref{Eqv-Obs-H3}
\eqref{Eqv-Obs-H4r2} into \eqref{Est-H20-NL21}, we have
\beq\label{Est-H20-NL22}\begin{split}
&33\int_{\R}\int^{R_0}_{r_0}\{\patl_r(\frac1r\patl_r\Del\psi)
\patl_r\patl_3\psi\}^2r^3drdx_3\\
\le C&\Big\{\int_{\R}\int^{R_0}_{r_0}(\Del_r\Del\psi)^2rdrdx_3
+\int_{\R}\int^{R_0}_{r_0}(\patl_r\Del\psi)^2rdrdx_3\Big\}\\
&\cdot\Big\{\int_{\R}\int^{R_0}_{r_0}(\Del_r\Del\psi)^2rdrdx_3
+\int_{\R}\int^{R_0}_{r_0}(\patl_r\Del\psi)^2r^{-1}drdx_3\Big\}\\
\le C&\Big\{\int_{\R}\int^{R_0}_{r_0}(\Del_r\Del\psi)^2rdrdx_3\Big\}^2+C,\\
\end{split}\eeq
where we have used the estimations \eqref{Est-H1-RCD00} and
\eqref{Est-H1w-RCD00}.

Inserting \eqref{Est-H20-NL12} and \eqref{Est-H20-NL22} into
\eqref{Est-H201}, applying Gronwall's inequality and the estimation
\eqref{Est-H1-RCD00}, we derive \eqref{Est-H2-RCD00}.
\end{proof}

\begin{lemma}\label{Est-r2t1-RCD} For the solutions of problem
\eqref{CSR12-ps20-ph0} \eqref{IDa-ps} \eqref{BC-RCD00}, we have
\beq\label{Est-r2t1-RCD00}\begin{split}
\int^t_0\int_{\R}\int^{R_0}_{r_0}\{(\Del_r\psi_t)^2
+(\patl_r\patl_3\psi_t)^2\}(s,r,x_3)rdrdx_3ds\le C_2,
\;\;\;\;\;\;\forall t\ge0,\\
\end{split}\eeq
where positive constant $C_2$ depends on $t$, $\nu$, $r_0$, $R_0$ and
\[\begin{split}
&\int_{\R}\int^{R_0}_{r_0}(\patl_r\Del\psi_0)^2rdrdx_3,\;\;
\int_{\R}\int^{R_0}_{r_0}(\patl_r\Del\psi_0)^2r^{-1}drdx_3,\\
&\int_{\R}\int^{R_0}_{r_0}(\Del_r\Del^k\psi_0)^2rdrdx_3
+\int_{\R}\int^{R_0}_{r_0}(\patl_r\patl_3\Del^k\psi_0)^2rdrdx_3\;\;(k=0,1).\\
\end{split}\]
\end{lemma}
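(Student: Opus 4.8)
The plan is to bound $\int_{\R}\int^{R_0}_{r_0}\{(\Del_r\psi_t)^2+(\patl_r\patl_3\psi_t)^2\}r\,drdx_3$ pointwise in $t$ in terms of quantities already controlled by Lemmas 2.3 and 2.4, and then integrate in time. The natural starting point is the same inner-product computation used in Lemma 2.4, but now testing equation \eqref{CSR12-ps20-ph0} against $r\patl_r\Del\psi_t$ has already produced (see \eqref{Est-H20}--\eqref{Est-H201}) control of $\int^t_0\int_{\R}\int^{R_0}_{r_0}\{(\patl_r\Del\psi_t)^2+(\patl_r\Del^2\psi)^2\}r\,drdx_3$ and of the quantity $\int_{\R}\int^{R_0}_{r_0}\{(\Del_r\Del\psi)^2+(\patl_r\patl_3\Del\psi)^2\}r\,drdx_3$. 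The point of the present lemma is that $(\Del_r\psi_t)^2+(\patl_r\patl_3\psi_t)^2$ is, up to part-integration identities of the type \eqref{Eqv-Obs-H3}, equivalent to $(\patl_r\Del\psi_t)^2$ plus lower-order terms; more precisely, the identity obtained by expanding $\Del=\Del_r+\patl_3^2$ gives
\[
\int_{\R}\int^{R_0}_{r_0}(\patl_r\Del\psi_t)^2rdrdx_3
=\int_{\R}\int^{R_0}_{r_0}(\Del_r\psi_t)^2_{\,r}\,\cdots
\]
wait — more usefully, the decomposition analogous to \eqref{Eqv-Obs-H3} reads
\[\begin{split}
\int_{\R}\int^{R_0}_{r_0}(\patl_r\Del\psi_t)^2rdrdx_3
=&\int_{\R}\int^{R_0}_{r_0}\{(\patl_r\Del_r\psi_t)^2+(\patl_r\patl_3^2\psi_t)^2\}rdrdx_3\\
&+2\int_{\R}\int^{R_0}_{r_0}(\patl_r^2\patl_3\psi_t)^2rdrdx_3
+2\int_{\R}\int^{R_0}_{r_0}(\patl_r\patl_3\psi_t)^2r^{-1}drdx_3,
\end{split}\]
so $\int(\patl_r\patl_3\psi_t)^2r\,drdx_3\le C\int(\patl_r\Del\psi_t)^2r\,drdx_3$ by Poincaré/Hardy on the annulus (using the boundary condition \eqref{BC-RCD00}), and similarly $\int(\Del_r\psi_t)^2r\,drdx_3$ is controlled once $\int(\patl_r\Del_r\psi_t)^2r\,drdx_3$ and $\int(\patl_r\patl_3^2\psi_t)^2r\,drdx_3$ are; but these are again dominated by $\int(\patl_r\Del\psi_t)^2r\,drdx_3$. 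Hence, after time integration, \eqref{Est-r2t1-RCD00} follows directly from the bound on $\int^t_0\int(\patl_r\Del\psi_t)^2r\,drdx_3ds$ recorded inside the proof of Lemma 2.4 (i.e. contained in $C_2$ of \eqref{Est-H2-RCD00}).

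Concretely I would carry out the following steps. First, rewrite $\patl_r\Del\psi_t$ using $\Del=\Del_r+\patl_3^2$ and integrate by parts in $r$ and $x_3$ exactly as in \eqref{Eqv-Obs-H3}--\eqref{Eqv-Obs-H4}, keeping the boundary terms, which vanish by \eqref{BC-RCD00} applied to $\psi_t$ (differentiate the boundary conditions in $t$). This yields the displayed decomposition of $\int(\patl_r\Del\psi_t)^2r\,drdx_3$ into a sum of nonnegative pieces, among which appear $\int(\Del_r\patl_3\psi_t)^2r\,drdx_3$ and terms that reassemble, via the same identity read backwards, into $\int\{(\Del_r\psi_t)^2+(\patl_r\patl_3\psi_t)^2\}r\,drdx_3$ after absorbing the curvature remainder $2\int(\patl_r\patl_3\psi_t)^2r^{-1}drdx_3\ge0$. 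Second, invoke the Hardy-type inequality $\int(\patl_r\patl_3\psi_t)^2r^{-1}drdx_3\le C\int(\patl_r^2\patl_3\psi_t)^2r\,drdx_3$ on $(r_0,R_0)$ together with the vanishing of $\patl_r\psi_t$ at the endpoints to close the loop. Third, integrate in $t$ over $[0,t]$ and substitute the a priori bound $\int^t_0\int_{\R}\int^{R_0}_{r_0}(\patl_r\Del\psi_t)^2r\,drdx_3ds\le C_2$, which is precisely one of the terms estimated in \eqref{Est-H2-RCD00}. Since all constants there depend only on $t,\nu,r_0,R_0$ and the listed norms of $\psi_0$, the same dependence is inherited here.

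The main obstacle I anticipate is bookkeeping of the weighted integration-by-parts boundary terms: on the annulus $r_0<r<R_0$ one must verify that every boundary contribution at $r=r_0$ and $r=R_0$ generated in rewriting $\int(\patl_r\Del\psi_t)^2r\,drdx_3$ really does vanish under \eqref{BC-RCD00}. The condition \eqref{BC-RCD00} supplies $\patl_r\psi=0$ and $\patl_r\Del\psi=0$ on the boundary; differentiating in $t$ gives $\patl_r\psi_t=\patl_r\Del\psi_t=0$ there, and one should check this is enough to kill the terms $[\,r^{-1}\patl_r\Del_r\psi_t\cdot(\cdots)\,]_{r_0}^{R_0}$ and similar — it is, because each such bracket carries a factor $\patl_r\psi_t$ or $\patl_r\Del\psi_t$ (or its $x_3$-derivative, which also vanishes since the boundary conditions hold for all $x_3$). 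Once that verification is in hand, the rest is a direct application of Lemma 2.4. No Gronwall argument is needed here: the estimate is a pure consequence of the parabolic regularity already extracted.
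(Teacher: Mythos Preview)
Your route is genuinely different from the paper's. The paper does \emph{not} post-process Lemma~2.4; instead it tests equation \eqref{CSR12-ps20-ph0} against the new multiplier $r\partial_r\psi_t$. The identity
\[
\int_{\R}\int^{R_0}_{r_0}\partial_r\Delta\psi_t\,\partial_r\psi_t\,r\,dr\,dx_3
=-\int_{\R}\int^{R_0}_{r_0}\{(\Delta_r\psi_t)^2+(\partial_r\partial_3\psi_t)^2\}\,r\,dr\,dx_3
\]
produces the desired quantity directly as a dissipation term, while $-\nu\int\partial_r\Delta^2\psi\,\partial_r\psi_t\,r$ becomes a total time derivative of third-order norms of $\psi$, bounded after integration by Lemma~2.3; the nonlinear terms are handled with H\"older and the $L^\infty$ bounds already used. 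Your approach---deduce \eqref{Est-r2t1-RCD00} from the bound on $\int_0^t\int(\partial_r\Delta\psi_t)^2r$ inside \eqref{Est-H2-RCD00} via functional inequalities on the annulus---is more economical in that it avoids re-estimating the nonlinearity, at the cost of Poincar\'e constants depending on $r_0,R_0$ (which is permitted here).

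That said, two steps in your sketch need repair. First, the ``Hardy-type inequality'' $\int(\partial_r\partial_3\psi_t)^2r^{-1}\le C\int(\partial_r^2\partial_3\psi_t)^2r$ is stated in the wrong direction; what you actually need (and what the decomposition you wrote gives) is $\int(\partial_r\partial_3\psi_t)^2r^{-1}\le\tfrac12\int(\partial_r\Delta\psi_t)^2r$, and then the trivial weight comparison $r\le R_0^2r^{-1}$ on the annulus. Second, and more importantly, the Poincar\'e step for $\Delta_r\psi_t$ is not automatic: the boundary conditions give $\partial_r\Delta_r\psi_t=0$ at $r=r_0,R_0$ (Neumann, not Dirichlet), and Neumann Poincar\'e fails without a mean-zero condition. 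The rescue is that $\Delta_r\psi_t\cdot r=\partial_r(r\partial_r\psi_t)$, so $\int_{r_0}^{R_0}\Delta_r\psi_t\,r\,dr=[r\partial_r\psi_t]_{r_0}^{R_0}=0$ for each $x_3$; this zero-mean property is exactly what makes the $r$-Poincar\'e inequality $\|\Delta_r\psi_t\|_{L^2(r\,dr)}\le C(r_0,R_0)\|\partial_r\Delta_r\psi_t\|_{L^2(r\,dr)}$ valid. You should state this explicitly, since without it the argument has a genuine gap.
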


\begin{proof} Taking inner product of the equation
\eqref{CSR12-ps20-ph0} with $r\patl_r\psi_t$, we have
\beq\label{Est-r2t10}\begin{split}
&\int_{\R}\int^{R_0}_{r_0}\patl_r\Del(\psi_t-\nu\Del\psi)
\patl_r\psi_trdrdx_3\\
&-\int_{\R}\int^{R_0}_{r_0}\Del_r\psi\patl_r\patl_3\Del\psi
\patl_r\psi_trdrdx_3\\
&+\int_{\R}\int^{R_0}_{r_0}\patl_r(\frac1r\patl_r\Del\psi)
\patl_r\patl_3\psi\patl_r\psi_tr^2drdx_3=0.
\end{split}\eeq

By part integration, we derive
\beq\label{Dr2t1}\begin{split}
&\int_{\R}\int^{R_0}_{r_0}\patl_r\Del\psi_t\patl_r\psi_trdrdx_3
=-\int_{\R}\int^{R_0}_{r_0}\{(\Del_r\psi_t)^2
+(\patl_r\patl_3\psi_t)^2rdrdx_3,\\
\end{split}\eeq

\beq\label{Dtr3}\begin{split}
&\nu\int_{\R}\int^{R_0}_{r_0}\patl_r\Del^2\psi\patl_r\psi_trdrdx_3\\
=&\frac{\nu}2\frac d{dt}\int_{\R}\int^{R_0}_{r_0}\{(\patl_r\Del_r\psi)^2
+2(\Del_r\patl_3\psi)^2+(\patl_r\patl_3^2\psi)^2\}rdrdx_3,\\
\end{split}\eeq

\beq\label{NT-r2t1}\begin{split}
&-\int_{\R}\int^{R_0}_{r_0}\Del_r\psi\patl_r\patl_3\Del\psi
\patl_r\psi_trdrdx_3\\
&+\int_{\R}\int^{R_0}_{r_0}\patl_r(\frac1r\patl_r\Del\psi)
\patl_r\patl_3\psi\patl_r\psi_tr^2drdx_3\\
=&\int_{\R}\int^{R_0}_{r_0}\Del_r\psi\patl_r\Del\psi
\patl_r\patl_3\psi_trdrdx_3\\
&-\int_{\R}\int^{R_0}_{r_0}\patl_r\Del\psi
\patl_r\patl_3\psi\Del_r\psi_trdrdx_3.\\
\end{split}\eeq

By H$\ddot{o}$lder's inequality, we obtain
\beq\label{NT-r2t10}\begin{split}
&\Big|\int_{\R}\int^{R_0}_{r_0}\Del_r\psi\patl_r\patl_3\Del\psi
\patl_r\psi_trdrdx_3
-\int_{\R}\int^{R_0}_{r_0}\patl_r(\frac1r\patl_r\Del\psi)
\patl_r\patl_3\psi\patl_r\psi_tr^2drdx_3\Big|\\
\le&\Big|\int_{\R}\int^{R_0}_{r_0}\Del_r\psi\patl_r\Del\psi
\patl_r\patl_3\psi_trdrdx_3\Big|
+\Big|\int_{\R}\int^{R_0}_{r_0}\patl_r\Del\psi
\patl_r\patl_3\psi\Del_r\psi_trdrdx_3\Big|\\
\le&\|\Del_r\psi(t,\cdot)\|_{L^{\infty}}\Big\{\int_{\R}\int^{R_0}_{r_0}
(\patl_r\Del\psi)^2rdrdx_3\Big\}^{1/2}\Big\{\int_{\R}\int^{R_0}_{r_0}
(\patl_r\patl_3\psi_t)^2rdrdx_3\Big\}^{1/2}\\
&+\|\patl_r\patl_3\psi(t,\cdot)\|_{L^{\infty}}\Big\{\int_{\R}
\int^{R_0}_{r_0}(\patl_r\Del\psi)^2rdrdx_3\Big\}^{1/2}\Big\{\int_{\R}
\int^{R_0}_{r_0}(\Del_r\psi_t)^2rdrdx_3\Big\}^{1/2}\\
\le&C\|\Del_r\psi(t,\cdot)\|_{L^{\infty}}\Big\{\int_{\R}\int^{R_0}_{r_0}
(\patl_r\patl_3\psi_t)^2rdrdx_3\Big\}^{1/2}\\
&+C\|\patl_r\patl_3\psi(t,\cdot)\|_{L^{\infty}}\Big\{\int_{\R}
\int^{R_0}_{r_0}(\Del_r\psi_t)^2rdrdx_3\Big\}^{1/2},\\
\end{split}\eeq
where we have used the estimation \eqref{Est-H1-RCD00}.

Inserting  \eqref{Est-LI-rr0} \eqref{Eqv-Obs-H4-r} \eqref{Eqv-Obs-H4-z}
\eqref{Eqv-Obs-H4} \eqref{Est-LI-rz1} \eqref{Eqv-Obs-H3} into
\eqref{NT-r2t10}, we get
\beq\label{NT-r2t11}\begin{split}
&\Big|\int_{\R}\int^{R_0}_{r_0}\Del_r\psi\patl_r\patl_3\Del\psi
\patl_r\psi_trdrdx_3-\int_{\R}\int^{R_0}_{r_0}\patl_r(\frac1r\patl_r
\Del\psi)\patl_r\patl_3\psi\patl_r\psi_tr^2drdx_3\Big|\\
\le&C\Big\{\int_{\R}\int^{R_0}_{r_0}(\patl_r\patl_3\psi_t)^2rdrdx_3\Big\}^{1/2}
+C\Big\{\int_{\R}\int^{R_0}_{r_0}(\Del_r\psi_t)^2rdrdx_3\Big\}^{1/2}\\
\le&\frac12\int_{\R}\int^{R_0}_{r_0}\{(\Del_r\psi_t)^2
+(\patl_r\patl_3\psi_t)^2\}rdrdx_3+C,\\
\end{split}\eeq
where we have used the estimations \eqref{Est-H1-RCD00} and \eqref{Est-H2-RCD00}.

Inserting \eqref{Dr2t1} \eqref{Dtr3} \eqref{Eqv-Obs-H3} \eqref{NT-r2t11}
into \eqref{Est-r2t10}, integrating with respect to $t$, we derive
\eqref{Est-r2t1-RCD00}.
\end{proof}

\begin{lemma}\label{Est-H3-RCD} For the solutions of problem
\eqref{CSR12-ps20-ph0} \eqref{IDa-ps} \eqref{BC-RCD00}, we have
\beq\label{Est-H3-RCD00}\begin{split}
&\int_{\R}\int^{R_0}_{r_0}\{(\patl_r\Del\psi_t)^2
+(\patl_r\Del^2\psi)^2\}rdrdx_3\\
&+\int^t_0\int_{\R}\int^{R_0}_{r_0}\{\Del_r\Del\psi_t\}^2
(s,r,x_3)rdrdx_3ds\\
&+\int^t_0\int_{\R}\int^{R_0}_{r_0}\{\Del_r\Del^2\psi\}^2
(s,r,x_3)rdrdx_3ds\\
&+\int^t_0\int_{\R}\int^{R_0}_{r_0}(\patl_r\patl_3\Del\psi_t)^2
(s,r,x_3)rdrdx_3ds\\
&+\int^t_0\int_{\R}\int^{R_0}_{r_0}(\patl_r\patl_3\Del^2\psi)^2
(s,r,x_3)rdrdx_3ds\\
\le&C_3,\;\;\;\;\;\;\forall t\ge0,\\
\end{split}\eeq
where positive constant $C_3$ depends on $t$, $\nu$, $r_0$, $R_0$ and
\[\begin{split}
&\int_{\R}\int^{R_0}_{r_0}(\patl_r\Del^m\psi_0)^2rdrdx_3\;\;(m=1,2),\;\;
\int_{\R}\int^{R_0}_{r_0}(\patl_r\Del\psi_0)^2r^{-1}drdx_3,\\
&\int_{\R}\int^{R_0}_{r_0}(\Del_r\Del^k\psi_0)^2rdrdx_3
+\int_{\R}\int^{R_0}_{r_0}(\patl_r\patl_3\Del^k\psi_0)^2rdrdx_3\;\;(k=0,1).\\
\end{split}\]
\end{lemma}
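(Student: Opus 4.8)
The plan is to couple two energy estimates — one for $\patl_r\Del\psi_t$ and one for $\patl_r\Del^2\psi$ — add them, and close by Gronwall's inequality, Lemmas \ref{Est-L2-RCD}--\ref{Est-r2t1-RCD} being used throughout to control every lower–order factor. Since the estimate is applied with $r_0,R_0$ fixed (the limit $r_0\to0$, $R_0\to\infty$ is taken only in Section 3), one may use $r_0\le r\le R_0$ freely, e.g. $\int f^2r^{-1}drdx_3\le r_0^{-2}\int f^2rdrdx_3$. Put $y(t)=\int_{\R}\int^{R_0}_{r_0}\{(\patl_r\Del\psi_t)^2+(\patl_r\Del^2\psi)^2\}rdrdx_3$, and record that squaring \eqref{CSR12-ps20-ph0}, multiplying by $r$, integrating and using the $L^\infty$–bounds below gives, exactly as in \eqref{Est-H20-t2r},
\[
\nu^2\int_{\R}\int^{R_0}_{r_0}(\patl_r\Del^2\psi)^2rdrdx_3\le3\int_{\R}\int^{R_0}_{r_0}(\patl_r\Del\psi_t)^2rdrdx_3+C,
\]
so $\int(\patl_r\Del^2\psi)^2r$ may be replaced by $\int(\patl_r\Del\psi_t)^2r$ plus a constant wherever it occurs linearly. \textbf{Step 1 (the $\psi_t$ estimate).} Differentiate \eqref{CSR12-ps20-ph0} in $t$ and take the inner product with $r\patl_r\Del\psi_t$. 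Differentiating \eqref{BC-RCD00} in $t$ gives $\patl_r\psi_t|_{\patl{\mathbb D}}=\patl_r\Del\psi_t|_{\patl{\mathbb D}}=0$, so the integrations by parts of Lemma \ref{Est-H1-RCD} (with $\psi_t$ in place of $\psi$) turn the linear part into
\[
\tfrac12\tfrac{d}{dt}\int_{\R}\int^{R_0}_{r_0}(\patl_r\Del\psi_t)^2rdrdx_3
+\nu\int_{\R}\int^{R_0}_{r_0}\{(\Del_r\Del\psi_t)^2+(\patl_r\patl_3\Del\psi_t)^2\}rdrdx_3=\mathcal N,
\]
where $\mathcal N$ is the sum of the four terms got by applying $\patl_t$ to the two nonlinearities of \eqref{CSR12-ps20-ph0} and pairing with $r\patl_r\Del\psi_t$, namely (up to signs) the integrals of $\Del_r\psi_t\,\patl_r\patl_3\Del\psi\,\patl_r\Del\psi_t\,r$, of $\Del_r\psi\,\patl_r\patl_3\Del\psi_t\,\patl_r\Del\psi_t\,r$, of $r\patl_r(\tfrac1r\patl_r\Del\psi_t)\,\patl_r\patl_3\psi\,\patl_r\Del\psi_t\,r$ and of $r\patl_r(\tfrac1r\patl_r\Del\psi)\,\patl_r\patl_3\psi_t\,\patl_r\Del\psi_t\,r$.

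\textbf{Step 2 (the $\Del^2\psi$ estimate).} Since $\Del\patl_r=\patl_r\Del+\tfrac1{r^2}\patl_r$ on cylindrically symmetric functions, applying $\Del$ to \eqref{CSR12-ps20-ph0} yields
\[
\patl_r\Del^2\psi_t-\nu\patl_r\Del^3\psi+\tfrac1{r^2}\patl_r\Del\psi_t-\tfrac\nu{r^2}\patl_r\Del^2\psi
=\Del\big\{\Del_r\psi\,\patl_r\patl_3\Del\psi-r\patl_r(\tfrac1r\patl_r\Del\psi)\,\patl_r\patl_3\psi\big\}.
\]
Evaluating \eqref{CSR12-ps20-ph0} on $\patl{\mathbb D}$, where $\patl_r\Del\psi_t=\patl_r\patl_3\Del\psi=\patl_r\patl_3\psi=0$ by \eqref{BC-RCD00}, shows $\patl_r\Del^2\psi|_{\patl{\mathbb D}}=0$, and hence $\patl_r\patl_3\Del^2\psi|_{\patl{\mathbb D}}=0$; so the inner product with $r\patl_r\Del^2\psi$, integrated by parts as in Lemma \ref{Est-H1-RCD} (with $\Del^2\psi$ in place of $\psi$), gives
\[
\tfrac12\tfrac{d}{dt}\int_{\R}\int^{R_0}_{r_0}(\patl_r\Del^2\psi)^2rdrdx_3
+\nu\int_{\R}\int^{R_0}_{r_0}\{(\Del_r\Del^2\psi)^2+(\patl_r\patl_3\Del^2\psi)^2\}rdrdx_3=\mathcal M,
\]
where $\mathcal M$ collects $\nu\int(\patl_r\Del^2\psi)^2r^{-1}$ and $-\int\tfrac1r\patl_r\Del\psi_t\,\patl_r\Del^2\psi$ (both $\le C(r_0,R_0)\,y$ by Cauchy--Schwarz) together with the nonlinear term obtained by pairing $\pm\Del\{\Del_r\psi\,\patl_r\patl_3\Del\psi-r\patl_r(\tfrac1r\patl_r\Del\psi)\,\patl_r\patl_3\psi\}$ with $r\patl_r\Del^2\psi$.

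\textbf{Step 3 (nonlinear estimates and closing).} In $\mathcal N$: the term with $\Del_r\psi\,\patl_r\patl_3\Del\psi_t$ is $\le\|\Del_r\psi\|_{L^\infty}\|\patl_r\patl_3\Del\psi_t\|_{L^2(rdrdx_3)}\|\patl_r\Del\psi_t\|_{L^2(rdrdx_3)}$, and $\|\Del_r\psi(t,\cdot)\|_{L^\infty({\mathbb D})}$ is bounded on every $[0,T]$ by Gagliardo--Nirenberg with \eqref{Est-LI-rr0}, \eqref{Eqv-Obs-H4} and Lemma \ref{Est-H2-RCD}, so Young absorbs $\eps\int(\patl_r\patl_3\Del\psi_t)^2r$ into the dissipation and leaves $Cy$; the terms with $r\patl_r(\tfrac1r\patl_r\Del\psi_t)=\patl_r^2\Del\psi_t-\tfrac1r\patl_r\Del\psi_t$ are handled by putting $L^\infty$ on $\patl_r\patl_3\psi$ (bounded on $[0,T]$ via \eqref{Est-LI-rz1}, \eqref{Eqv-Obs-H3}, \eqref{Eqv-Obs-H4}, Lemmas \ref{Est-H1-RCD}, \ref{Est-H2-RCD}) and using $\int(\patl_r^2\Del\psi_t)^2r\le2\int(\patl_r\Del\psi_t)^2r^{-1}+2\int(\Del_r\Del\psi_t)^2r$ (cf. \eqref{Eqv-Obs-H4r2}) with $\int(\patl_r\Del\psi_t)^2r^{-1}\le r_0^{-2}\int(\patl_r\Del\psi_t)^2r$; the two remaining terms use Lemma \ref{Est-r2t1-RCD}, \eqref{Eqv-Obs-H3} and the Ladyzhenskaya inequality $\|f\|_{L^4}^2\le C\|f\|_{L^2}\|\nab f\|_{L^2}$ applied to $\patl_r\patl_3\psi_t$ to trade a power of $\patl_r\patl_3\psi_t$ against the Step–1 dissipation. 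In $\mathcal M$: expand each $\Del\{\cdots\}$ by the product rule and integrate by parts in $r,x_3$ (boundary terms vanishing by the conditions above) so that in each resulting integral exactly one factor is top–order — absorbable, after multiplication by $r^{\pm1/2}$, into the Step–2 dissipation — while $L^\infty$ is kept only on $\Del_r\psi$, $\patl_r\patl_3\psi$, $\patl_r\patl_3\Del\psi$ and $\nab\Del_r\psi$; by \eqref{Est-LI-rz1}, \eqref{Est-LI-rr0} and Lemmas \ref{Est-L2-RCD}, \ref{Est-H1-RCD}, \ref{Est-H2-RCD} the first two are bounded by constants on $[0,T]$ while $\|\patl_r\patl_3\Del\psi\|_{L^\infty}$ and $\|\nab\Del_r\psi\|_{L^\infty}$ obey bounds of the absorbable form $\eps\cdot(\text{dissipation})+C(y+C)$; every remaining middle–order factor ($\patl_r^2\patl_3\Del\psi$, $\patl_r\patl_3^2\Del\psi$, $\patl_r^3\Del\psi$, $\Del\Del_r\psi$, \dots) is reduced, through \eqref{Eqv-Obs-H3}, \eqref{Eqv-Obs-H4}, \eqref{Eqv-Obs-H4-r}, \eqref{Eqv-Obs-H4-z}, \eqref{Eqv-Obs-H4r2}, the displayed relation above and Lemmas \ref{Est-H1w-RCD}, \ref{Est-H2-RCD}, to $\int(\patl_r\Del^2\psi)^2r$ and $\int(\patl_r\Del\psi)^2r^{-1}$. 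Adding the identities of Steps 1 and 2 and inserting all of this, every top–order term is absorbed and one is left with $y'+c\,\mathcal D\le a(t)y+b(t)$, where $\mathcal D$ is the sum of the four dissipation integrals and $a,b\in L^1_{\mathrm{loc}}[0,\infty)$ depend only on $\nu,r_0,R_0$ and the data in \eqref{IDa0-upsHm}; Gronwall's inequality then yields \eqref{Est-H3-RCD00}.

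The main obstacle is $\mathcal M$: the Laplacian of the nonlinearities produces derivatives of $\psi$ of order two above those controlled in Lemma \ref{Est-H2-RCD}, and one must distribute them by integration by parts so that exactly one factor per term is genuinely top–order — hence absorbable into the Step–2 dissipation — while all other factors are $L^\infty$–bounded low–order quantities or middle–order quantities reducible, via the weighted identities \eqref{Eqv-Obs-H3}--\eqref{Eqv-Obs-H4r2} and the relation $\nu\patl_r\Del^2\psi=\patl_r\Del\psi_t-(\text{nonlinearities})$, to $\int(\patl_r\Del\psi_t)^2r$ plus constants. Keeping this bookkeeping tight enough that the resulting differential inequality is \emph{linear} in $y$ — in particular never generating a genuine $y^2$ term, which would force finite–time blow–up — is the crux; the device is to write each $L^\infty$–interpolation (as in \eqref{Est-LI-rz1}, \eqref{Est-LI-rr0}) in the form (small multiple of dissipation) $+$ (constant, or $C(y+C)^{\theta}$ with $\theta<1$) rather than crudely bounding it by a power of $y$.
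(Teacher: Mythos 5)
Your Step 1 coincides with the paper's core computation (the $t$-differentiated equation tested with $r\patl_r\Del\psi_t$), but your Step 2 is where the argument has a genuine gap. You propose a second parabolic energy estimate obtained by applying $\Del$ to \eqref{CSR12-ps20-ph0} and testing with $r\patl_r\Del^2\psi$, which forces you to estimate $\mathcal M$, i.e. $\Del\{\Del_r\psi\,\patl_r\patl_3\Del\psi-r\patl_r(\frac1r\patl_r\Del\psi)\patl_r\patl_3\psi\}$ paired with $r\patl_r\Del^2\psi$. You yourself call this the main obstacle, and what you offer for it is a plan, not a proof: the claim that after suitable integrations by parts every term has exactly one top-order factor, that $\|\patl_r\patl_3\Del\psi\|_{L^{\infty}}$ and $\|\nab\Del_r\psi\|_{L^{\infty}}$ admit bounds of the form $\eps\cdot(\text{dissipation})+C(y+C)$, and that every ``middle-order'' factor (e.g. $\patl_r^2\patl_3\Del\psi$, $\patl_r^3\Del\psi$, $\patl_r\patl_3^2\Del\psi$ in weighted $L^2$) reduces via \eqref{Eqv-Obs-H3}--\eqref{Eqv-Obs-H4r2} to quantities controlled by $y$ and Lemmas \ref{Est-H1w-RCD}--\ref{Est-r2t1-RCD}, is never verified. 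Those reductions are not consequences of the cited identities as stated: they require new weighted integrations by parts at one order higher than anything in Lemmas \ref{Est-L2-RCD}--\ref{Est-r2t1-RCD}, and the boundary terms they generate are not obviously zero, since the only vanishing traces available are $\patl_r\psi$, $\patl_r\Del\psi$ (hence their $t$- and $x_3$-derivatives) and, as you correctly derive from the equation, $\patl_r\Del^2\psi$; quantities such as $\patl_r\patl_3\Del_r\psi$ or $\patl_r^2\Del\psi$ have no prescribed boundary values. Likewise no computation is given showing that the resulting differential inequality is genuinely linear in $y$ — which you correctly identify as the crux. A smaller omission: you never bound $y(0)$, in particular $\int(\patl_r\Del\psi_t)^2(0,\cdot)\,rdrdx_3$, which must be read off from the equation at $t=0$ (as in \eqref{Est-r5-r3t-I0}).

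The paper shows that your Step 2 is unnecessary, and this is exactly how it avoids the $\Del\{\mbox{nonlinearity}\}$ problem. It performs only the Step-1 estimate; after Gronwall (using \eqref{Est-r2t1-RCD00} so that the coefficient of $y$ is merely $L^1$ in time) it has $\int(\patl_r\Del\psi_t)^2rdrdx_3$ pointwise in $t$ and the time integrals of $(\Del_r\Del\psi_t)^2$, $(\patl_r\patl_3\Del\psi_t)^2$. The remaining purely spatial quantities are then recovered algebraically from the equation rather than by a second energy estimate: $\int(\patl_r\Del^2\psi)^2rdrdx_3$ from \eqref{CSR12-ps20-ph0} itself via \eqref{Est-r5-r3t}, and the time integrals of $(\Del_r\Del^2\psi)^2$ and $(\patl_r\patl_3\Del^2\psi)^2$ from the $r$- and $x_3$-differentiated equation, \eqref{Eq1-r4t-r6}--\eqref{Eq2-r4t-r5z}, whose nonlinear contributions \eqref{Est-r4t-NT1}--\eqref{Est-r4t-NT6} involve only derivatives already controlled by \eqref{Est-H2-RCD00} and \eqref{Est-H32}. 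If you want to keep your two-estimate architecture you must actually carry out the $\mathcal M$ estimates, including the boundary-term bookkeeping; as written, the central step is asserted rather than proved, so the proposal does not yet establish \eqref{Est-H3-RCD00}.
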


\begin{proof} Differentiating equation \eqref{CSR12-ps20-ph0} with
respect to $t$, we obtian
\beq\label{CSR12-ps20-t2}\begin{split}
\patl_r\Del(\psi_{tt}-\nu\Del\psi_t)-\{\Del_r\psi\patl_r\patl_3\Del
\psi\}_t+\{\patl_r(\frac1r\patl_r\Del\psi)\patl_r\patl_3\psi\}_tr=0.
\end{split}\eeq

Taking inner product of the equation \eqref{CSR12-ps20-t2} with
$r\patl_r\Del\psi_t$, we have
\beq\label{Est-H330}\begin{split}
&\int_{\R}\int^{R_0}_{r_0}\patl_r\Del(\psi_{tt}-\nu\Del\psi_t)
\patl_r\Del\psi_trdrdx_3\\
&-\int_{\R}\int^{R_0}_{r_0}\{\Del_r\psi\patl_r\patl_3\Del\psi\}_t
\patl_r\Del\psi_trdrdx_3\\
&+\int_{\R}\int^{R_0}_{r_0}\{\patl_r(\frac1r\patl_r\Del\psi)
\patl_r\patl_3\psi\}_t\patl_r\Del\psi_tr^2drdx_3=0.
\end{split}\eeq

By part integration, we derive
\beq\label{Dt330}\begin{split}
&\int_{\R}\int^{R_0}_{r_0}\patl_r\Del\psi_{tt}\patl_r\Del\psi_trdrdx_3
=\frac12\frac{d}{dt}\int_{\R}\int^{R_0}_{r_0}
(\patl_r\Del\psi_t)^2rdrdx_3,\\
\end{split}\eeq

\beq\label{Dr330}\begin{split}
&-\nu\int_{\R}\int^{R_0}_{r_0}\patl_r\Del^2\psi_t\patl_r\Del\psi_trdrdx_3\\
=&\nu\int_{\R}\int^{R_0}_{r_0}(\Del_r\Del\psi_t)^2rdrdx_3
+\nu\int_{\R}\int^{R_0}_{r_0}(\patl_r\patl_3\Del\psi_t)^2rdrdx_3,\\
\end{split}\eeq

\beq\label{NL330}\begin{split}
&-\int_{\R}\int^{R_0}_{r_0}\{\Del_r\psi\patl_r\patl_3\Del\psi\}_t
\patl_r\Del\psi_trdrdx_3\\
&+\int_{\R}\int^{R_0}_{r_0}\{\patl_r(\frac1r\patl_r\Del\psi)
\patl_r\patl_3\psi\}_t\patl_r\Del\psi_tr^2drdx_3\\
=&-\int_{\R}\int^{R_0}_{r_0}\frac1r\patl_r\patl_3\psi
(\patl_r\Del\psi_t)^2rdrdx_3\\
&+\int_{\R}\int^{R_0}_{r_0}\Del_r\psi_t\patl_r\Del\psi
\patl_r\patl_3\Del\psi_trdrdx_3
-\int_{\R}\int^{R_0}_{r_0}\patl_r\Del\psi
\patl_r\patl_3\psi_t\Del_r\Del\psi_trdrdx_3.\\
\end{split}\eeq

By H$\ddot{o}$lder's inequality, we obtain
\beq\label{NT330}\begin{split}
&\Big|-\int_{\R}\int^{R_0}_{r_0}\{\Del_r\psi\patl_r\patl_3\Del\psi\}_t
\patl_r\Del\psi_trdrdx_3\\
&+\int_{\R}\int^{R_0}_{r_0}\{\patl_r(\frac1r\patl_r\Del\psi)
\patl_r\patl_3\psi\}_t\patl_r\Del\psi_tr^2drdx_3\Big|\\
\le&r_0^{-1}\|\patl_r\patl_3\psi(t,\cdot)\|_{L^{\infty}}\int_{\R}
\int^{R_0}_{r_0}(\patl_r\Del\psi_t)^2rdrdx_3\\
&+\|\patl_r\Del\psi(t,\cdot)\|_{L^{\infty}}\Big\{\int_{\R}
\int^{R_0}_{r_0}(\Del_r\psi_t)^2rdrdx_3\Big\}^{1/2}\\
&\hspace{5mm}\cdot\Big\{\int_{\R}\int^{R_0}_{r_0}(\patl_r\patl_3\Del
\psi_t)^2rdrdx_3\Big\}^{1/2}\\
&+\|\patl_r\Del\psi(t,\cdot)\|_{L^{\infty}}\Big\{\int_{\R}
\int^{R_0}_{r_0}(\patl_r\patl_3\psi_t)^2rdrdx_3\Big\}^{1/2}\\
&\hspace{5mm}\cdot\Big\{\int_{\R}\int^{R_0}_{r_0}(\Del_r\Del
\psi_t)^2rdrdx_3\Big\}^{1/2}.\\
\end{split}\eeq

Using the same argument as in the proof of \eqref{Est-LI-rz0}
\eqref{Est-LI-rz1}, we get
\beq\label{Est-LI-r30}\begin{split}
&\|\patl_r\Del\psi(t,\cdot)\|_{L^{\infty}}^2
=\|(\patl_r\Del\psi)^2(t,\cdot)\|_{L^{\infty}}\\
\le&4\int^{R_0}_{r_0}\big\{\|\patl_r^2\Del\patl_3\psi(t,r,\cdot)
\|_{L^2}^{1/2}\|\patl_r^2\Del\psi(t,r,\cdot)\|_{L^2}^{1/2}\\
&\hspace{13mm}\|\patl_r\Del\patl_3\psi(t,r,\cdot)\|_{L^2}^{1/2}
\|\patl_r\Del\psi(t,r,\cdot)\|_{L^2}^{1/2}\big\}dr\\
\le&4\Big\{\int_{\R}\int^{R_0}_{r_0}(\patl_r^2\Del\patl_3\psi)^2rdrdx_3
\int_{\R}\int^{R_0}_{r_0}(\patl_r\Del\patl_3\psi)^2r^{-1}drdx_3\Big\}^{1/4}\\
&\hspace{5mm}\cdot\Big\{\int_{\R}\int^{R_0}_{r_0}(\patl_r^2\Del\psi)^2rdrdx_3
\int_{\R}\int^{R_0}_{r_0}(\patl_r\Del\psi)^2r^{-1}drdx_3\Big\}^{1/4}\\
\le&2\Big\{\int_{\R}\int^{R_0}_{r_0}(\Del_r\Del\patl_3\psi)^2rdrdx_3\Big\}^{1/2}
\Big\{\int_{\R}\int^{R_0}_{r_0}(\Del_r\Del\psi)^2rdrdx_3\Big\}^{1/2}.\\
\end{split}\eeq

There exists the following observation
\beq\label{Eqv-Obs-H5}\begin{split}
&\int_{\R}\int^{R_0}_{r_0}(\patl_r\Del^2\psi)^2rdrdx_3\\
=&\int_{\R}\int^{R_0}_{r_0}(\patl_r\Del_r\Del\psi)^2rdrdx_3
+\int_{\R}\int^{R_0}_{r_0}(\patl_r\patl_3^2\Del\psi)^2rdrdx_3\\
&+2\int_{\R}\int^{R_0}_{r_0}(\Del_r\patl_3\Del\psi)^2rdrdx_3.\\
\end{split}\eeq

Inserting \eqref{Est-LI-rz1} \eqref{Eqv-Obs-H3} \eqref{Eqv-Obs-H4}
\eqref{Eqv-Obs-H5} \eqref{Est-LI-r30} into \eqref{NT330}, we have
\beq\label{NT331}\begin{split}
&\Big|-\int_{\R}\int^{R_0}_{r_0}\{\Del_r\psi\patl_r\patl_3\Del\psi\}_t
\patl_r\Del\psi_trdrdx_3\\
&+\int_{\R}\int^{R_0}_{r_0}\{\patl_r(\frac1r\patl_r\Del\psi)
\patl_r\patl_3\psi\}_t\patl_r\Del\psi_tr^2drdx_3\Big|\\
\le&C\int_{\R}\int^{R_0}_{r_0}(\patl_r\Del\psi_t)^2rdrdx_3\\
&+C\Big\{\int_{\R}\int^{R_0}_{r_0}(\patl_r\Del^2\psi)^2rdrdx_3\Big\}^{1/4}
\Big\{\int_{\R}\int^{R_0}_{r_0}(\Del_r\psi_t)^2rdrdx_3\Big\}^{1/2}\\
&\hspace{5mm}\cdot\Big\{\int_{\R}\int^{R_0}_{r_0}(\patl_r\patl_3\Del
\psi_t)^2rdrdx_3\Big\}^{1/2}\\
&+C\Big\{\int_{\R}\int^{R_0}_{r_0}(\patl_r\Del^2\psi)^2rdrdx_3\Big\}^{1/4}
\Big\{\int_{\R}\int^{R_0}_{r_0}(\patl_r\patl_3\psi_t)^2rdrdx_3\Big\}^{1/2}\\
&\hspace{5mm}\cdot\Big\{\int_{\R}\int^{R_0}_{r_0}(\Del_r
\Del\psi_t)^2rdrdx_3\Big\}^{1/2}\\
\le&\frac{\nu}2\int_{\R}\int^{R_0}_{r_0}\{(\Del_r\Del\psi_t)^2
+(\patl_r\patl_3\Del\psi_t)^2\}rdrdx_3
+C\int_{\R}\int^{R_0}_{r_0}(\patl_r\Del\psi_t)^2rdrdx_3\\
&+C\int_{\R}\int^{R_0}_{r_0}\{(\Del_r\psi_t)^2+(\patl_r\patl_3\psi_t)^2\}rdrdx_3
\Big\{\int_{\R}\int^{R_0}_{r_0}(\patl_r\Del^2\psi)^2rdrdx_3\Big\}^{1/2},\\
\end{split}\eeq
where we have used the estimations \eqref{Est-H1-RCD00}
\eqref{Est-H2-RCD00}.

Inserting \eqref{Dt330} \eqref{Dr330} \eqref{NT331} into
\eqref{Est-H330}, we derive
\beq\label{Est-H31}\begin{split}
&\frac{d}{dt}\int_{\R}\int^{R_0}_{r_0}(\patl_r\Del\psi_t)^2rdrdx_3\\
&+\nu\int_{\R}\int^{R_0}_{r_0}\{(\Del_r\Del\psi_t)^2
+(\patl_r\patl_3\Del\psi_t)^2\}rdrdx_3\\
\le&C\int_{\R}\int^{R_0}_{r_0}(\patl_r\Del\psi_t)^2rdrdx_3\\
&+C\int_{\R}\int^{R_0}_{r_0}\{(\Del_r\psi_t)^2+(\patl_r\patl_3\psi_t)^2\}rdrdx_3
\Big\{\int_{\R}\int^{R_0}_{r_0}(\patl_r\Del^2\psi)^2rdrdx_3\Big\}^{1/2}.\\
\end{split}\eeq

By \eqref{CSR12-ps20-ph0} \eqref{Est-H20-t2r} \eqref{Est-H20-NL12}
\eqref{Est-H20-NL22} and \eqref{Est-H2-RCD00}, we obtain
\beq\label{Est-r5-r3t}\begin{split}
\nu^2\int_{\R}\int^{R_0}_{r_0}(\patl_r\Del^2\psi)^2rdrdx_3
\le3\int_{\R}\int^{R_0}_{r_0}(\patl_r\Del\psi_t)^2rdrdx_3+C.\\
\end{split}\eeq

Employing equation \eqref{CSR12-ps20-ph0} and using the same argument
as in the proof of \eqref{Est-H20-NL12} \eqref{Est-H20-NL22}, we get
\beq\label{Est-r5-r3t-I0}\begin{split}
\int_{\R}\int^{R_0}_{r_0}(\patl_r\Del\psi_t)^2(0,r,x_3)rdrdx_3
\le3\nu^2\int_{\R}\int^{R_0}_{r_0}(\patl_r\Del^2\psi_0)^2rdrdx_3+C.\\
\end{split}\eeq

Applying \eqref{Est-H31} \eqref{Est-r5-r3t} \eqref{Est-r5-r3t-I0}
\eqref{Est-r2t1-RCD00} and using Gronwall's inequality, we derive
\beq\label{Est-H32}\begin{split}
&\int_{\R}\int^{R_0}_{r_0}(\patl_r\Del\psi_t)^2rdrdx_3
+\int_{\R}\int^{R_0}_{r_0}(\patl_r\Del^2\psi)^2rdrdx_3\\
&+\int^t_0\int_{\R}\int^{R_0}_{r_0}\{(\Del_r\Del\psi_s)^2
+(\patl_r\patl_3\Del\psi_s)^2\}(s,r,x_3)rdrdx_3\\
\le&C,\;\;\;\;\forall t\ge0.\\
\end{split}\eeq

By \eqref{CSR12-ps20-ph0}, we have
\beq\label{Eq1-r4t-r6}\begin{split}
\nu\Del_r\Del^2\psi
=&\Del_r\Del\psi_t-\patl_r\{\Del_r\psi\patl_r\patl_3\Del
\psi\}-r^{-1}\Del_r\psi\patl_r\patl_3\Del\psi\\
&+\patl_r\{\patl_r(\frac1r\patl_r\Del\psi)\patl_r\patl_3\psi\}r
+2\patl_r(\frac1r\patl_r\Del\psi)\patl_r\patl_3\psi,
\end{split}\eeq

\beq\label{Eq2-r4t-r5z}\begin{split}
\nu\patl_r\patl_3\Del^2\psi
=\patl_r\patl_3\Del\psi_t-\patl_3\{\Del_r\psi\patl_r\patl_3\Del
\psi\}+\patl_3\{\patl_r(\frac1r\patl_r\Del\psi)\patl_r\patl_3\psi\}r.
\end{split}\eeq
Similar the proof of \eqref{Est-r5-r3t}, by equations
\eqref{Eq1-r4t-r6} \eqref{Eq2-r4t-r5z} and H$\ddot{o}$lder's
inequality, it is easy to prove
\beq\label{Est-r4t-r6}\begin{split}
&\nu^2\int_{\R}\int^{R_0}_{r_0}\{(\Del_r\Del^2\psi)^2
+(\patl_r\patl_3\Del^2\psi)^2\}rdrdx_3\\
\le&6\int_{\R}\int^{R_0}_{r_0}\{(\Del_r\Del\psi_t)^2
+(\patl_r\patl_3\Del\psi_t)^2\}rdrdx_3\\
&+6\int_{\R}\int^{R_0}_{r_0}\Big\{(\patl_r\{\Del_r\psi\patl_r\patl_3
\Del\psi\})^2+(r^{-1}\Del_r\psi\patl_r\patl_3\Del\psi)^2\\
&+\big(\patl_r\{\patl_r(\frac1r\patl_r\Del\psi)\patl_r\patl_3\psi\}r\big)^2
+\{\patl_r(\frac1r\patl_r\Del\psi)\patl_r\patl_3\psi\}^2\\
&+(\patl_3\{\Del_r\psi\patl_r\patl_3\Del\psi\})^2
+\big(\patl_3\{\patl_r(\frac1r\patl_r\Del\psi)\patl_r\patl_3\psi\}r\big)^2\Big\}rdrdx_3.\\
\end{split}\eeq
By using \eqref{Est-H2-RCD00} \eqref{Est-H32} \eqref{Eqv-Obs-H4}
\eqref{Eqv-Obs-H5} and Gagliardo-Nirenberg's inequality, we get
\beq\label{Est-r4t-NT1}\begin{split}
&\int_{\R}\int^{R_0}_{r_0}(\patl_r\{\Del_r\psi\patl_r\patl_3
\Del\psi\})^2 rdrdx_3\\
\le&\|\Del_r\psi(t,\cdot)\|_{L^{\infty}}\int_{\R}\int^{R_0}_{r_0}
(\patl_r^2\patl_3\Del\psi)^2 rdrdx_3\\
&+\|\patl_r\Del_r\psi(t,\cdot)\|_{L^{\infty}}\int_{\R}\int^{R_0}_{r_0}
(\patl_r\patl_3\Del\psi)^2 rdrdx_3\\
\le&C,
\end{split}\eeq

\beq\label{Est-r4t-NT2}\begin{split}
&\int_{\R}\int^{R_0}_{r_0}(r^{-1}\Del_r\psi\patl_r\patl_3\Del\psi)^2
rdrdx_3\\
\le&C\|\Del_r\psi(t,\cdot)\|_{L^{\infty}}\int_{\R}\int^{R_0}_{r_0}
(\patl_r\patl_3\Del\psi)^2rdrdx_3\le C,\\
\end{split}\eeq

\beq\label{Est-r4t-NT3}\begin{split}
&\int_{\R}\int^{R_0}_{r_0}\big(\patl_r\{\patl_r(\frac1r\patl_r\Del\psi)
\patl_r\patl_3\psi\}r\big)^2 rdrdx_3\\
\le&C\|\patl_r\patl_3\psi(t,\cdot)\|_{L^{\infty}}\int_{\R}
\int^{R_0}_{r_0}\{(\patl_r^3\Del\psi)^2+(\patl_r^2\Del\psi)^2
+(\patl_r\Del\psi)^2\}rdrdx_3\\
&+C\|\patl_r^2\patl_3\psi(t,\cdot)\|_{L^{\infty}}\int_{\R}
\int^{R_0}_{r_0}\{(\patl_r^2\Del\psi)^2+(\patl_r\Del\psi)^2\}rdrdx_3\\
\le&C,
\end{split}\eeq

\beq\label{Est-r4t-NT4}\begin{split}
&\int_{\R}\int^{R_0}_{r_0}\{\patl_r(\frac1r\patl_r\Del\psi)
\patl_r\patl_3\psi\}^2 rdrdx_3\\
\le&C\|\patl_r\patl_3\psi(t,\cdot)\|_{L^{\infty}}\int_{\R}
\int^{R_0}_{r_0}\{(\patl_r^2\Del\psi)^2+(\patl_r\Del\psi)^2\}rdrdx_3\\
\le&C,
\end{split}\eeq

\beq\label{Est-r4t-NT5}\begin{split}
&\int_{\R}\int^{R_0}_{r_0}(\patl_3\{\Del_r\psi\patl_r\patl_3
\Del\psi\})^2 rdrdx_3\\
\le&C\|\Del_r\psi(t,\cdot)\|_{L^{\infty}}\int_{\R}\int^{R_0}_{r_0}
(\patl_r\patl_3^2\Del\psi)^2rdrdx_3\\
&+C\|\patl_3\Del_r\psi(t,\cdot)\|_{L^{\infty}}\int_{\R}\int^{R_0}_{r_0}
(\patl_r\patl_3\Del\psi)^2rdrdx_3\\
\le&C,\\
\end{split}\eeq

\beq\label{Est-r4t-NT6}\begin{split}
&\int_{\R}\int^{R_0}_{r_0}\big(\patl_3\{\patl_r(\frac1r\patl_r\Del\psi)
\patl_r\patl_3\psi\}r\big)^2 rdrdx_3\\
\le&C\|\patl_r\patl_3\psi(t,\cdot)\|_{L^{\infty}}\int_{\R}\int^{R_0}_{
r_0}\{(\patl_3\patl_r^2\Del\psi)^2+(\patl_r\patl_3\Del\psi)^2\}rdrdx_3\\
&+C\|\patl_r\patl_3^2\psi(t,\cdot)\|_{L^{\infty}}\int_{\R}
\int^{R_0}_{r_0}\{(\patl_r^2\Del\psi)^2+(\patl_r\Del\psi)^2\}rdrdx_3\\
\le&C.
\end{split}\eeq
Inserting \eqref{Est-r4t-NT1}--\eqref{Est-r4t-NT6} into
\eqref{Est-r4t-r6}, we obtain
\beq\label{Est-r4t-r61}\begin{split}
&\nu^2\int_{\R}\int^{R_0}_{r_0}\{(\Del_r\Del^2\psi)^2
+(\patl_r\patl_3\Del^2\psi)^2\}rdrdx_3\\
\le&6\int_{\R}\int^{R_0}_{r_0}\{(\Del_r\Del\psi_t)^2
+(\patl_r\patl_3\Del\psi_t)^2\}rdrdx_3+C.\\
\end{split}\eeq

Putting \eqref{Est-r5-r3t} \eqref{Est-r4t-r61} together,
we derive \eqref{Est-H3-RCD00}.
\end{proof}

\begin{lemma}\label{Est-H4-RCD} For the solutions of problem
\eqref{CSR12-ps20-ph0} \eqref{IDa-ps} \eqref{BC-RCD00}, we have
\beq\label{Est-H4-RCD00}\begin{split}
&\int_{\R}\int^{R_0}_{r_0}\{(\Del_r\Del\psi_t)^2
+(\patl_r\patl_3\Del\psi_t)^2\}rdrdx_3\\
&\int_{\R}\int^{R_0}_{r_0}\{(\Del_r\Del^2\psi)^2
+(\patl_r\patl_3\Del^2\psi)^2\}rdrdx_3\\
&+\int^t_0\int_{\R}\int^{R_0}_{r_0}\{\patl_r\Del\psi_{tt}\}^2
(s,r,x_3)rdrdx_3ds\\
&+\int^t_0\int_{\R}\int^{R_0}_{r_0}\{\patl_r\Del^2\psi_t\}^2
(s,r,x_3)rdrdx_3ds\\
\le&C_4,\;\;\;\;\;\;\forall t\ge0,\\
\end{split}\eeq
where positive constant $C_4$ depends on $t$, $\nu$, $r_0$, $R_0$ and
\[\begin{split}
&\int_{\R}\int^{R_0}_{r_0}(\patl_r\Del^m\psi_0)^2rdrdx_3\;\;(m=1,2),\;\;
\int_{\R}\int^{R_0}_{r_0}(\patl_r\Del\psi_0)^2r^{-1}drdx_3,\\
&\int_{\R}\int^{R_0}_{r_0}(\Del_r\Del^k\psi_0)^2rdrdx_3
+\int_{\R}\int^{R_0}_{r_0}(\patl_r\patl_3\Del^k\psi_0)^2rdrdx_3\;\;(k=0,1,2).\\
\end{split}\]
\end{lemma}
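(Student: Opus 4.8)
The plan is to continue the argument of Lemma \ref{Est-H3-RCD} one order higher, again starting from the time-differentiated equation \eqref{CSR12-ps20-t2} but now testing it against $r\patl_r\Del\psi_{tt}$ in place of $r\patl_r\Del\psi_t$. Multiplying \eqref{CSR12-ps20-t2} by $r\patl_r\Del\psi_{tt}$ and integrating over ${\mathbb D}$, the term $\patl_r\Del\psi_{tt}$ contributes $\int_{\R}\int^{R_0}_{r_0}(\patl_r\Del\psi_{tt})^2rdrdx_3$ directly, and the same integration by parts in $r$ and $x_3$ as in \eqref{Dr330} — applied with $\psi_t$ in the first slot and $\patl_t$ acting on the second — turns the viscous term $-\nu\patl_r\Del^2\psi_t\cdot\patl_r\Del\psi_{tt}$ into $\frac{\nu}{2}\frac{d}{dt}\int_{\R}\int^{R_0}_{r_0}\{(\Del_r\Del\psi_t)^2+(\patl_r\patl_3\Del\psi_t)^2\}rdrdx_3$, all boundary contributions vanishing by \eqref{BC-RCD00}. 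This isolates, in a single identity, the two sixth-order (in the counting $\patl_t\sim\Del$) quantities to be bounded in $L^\infty_t$ together with the dissipation $\int_0^t\int_{\R}\int^{R_0}_{r_0}(\patl_r\Del\psi_{tt})^2rdrdx_3ds$.

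It then remains to control the two nonlinear terms, namely $\int\{\Del_r\psi\patl_r\patl_3\Del\psi\}_t\,\patl_r\Del\psi_{tt}\,rdrdx_3$ and $\int\{\patl_r(\frac1r\patl_r\Del\psi)\patl_r\patl_3\psi\}_t\,\patl_r\Del\psi_{tt}\,r^2drdx_3$. Expanding each $\{\,\cdot\,\}_t$ by the product rule gives four terms, and on each I would use Young's inequality to absorb $\frac14\int_{\R}\int^{R_0}_{r_0}(\patl_r\Del\psi_{tt})^2rdrdx_3$ into the dissipation and then bound the remaining quadratic factor by Hölder's inequality with one factor in $L^\infty_x$. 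In the two terms carrying $\patl_r\patl_3\Del\psi_t$ or $\Del_r\Del\psi_t$ linearly, the companion factor is estimated in $L^\infty_x$ — $\|\Del_r\psi\|_{L^\infty}$ via \eqref{Est-LI-rr0} with Lemma \ref{Est-H2-RCD}, and $\|\patl_r\patl_3\psi\|_{L^\infty}$ via \eqref{Est-LI-rz1} with Lemma \ref{Est-L2-RCD} — so that these terms are dominated by $C\int_{\R}\int^{R_0}_{r_0}\{(\Del_r\Del\psi_t)^2+(\patl_r\patl_3\Del\psi_t)^2\}rdrdx_3$ plus a bounded remainder (the identity \eqref{Eqv-Obs-H4r2}, and its $\psi_t$-analogue, converts $\patl_r^2\Del(\,\cdot\,)$ into $\Del_r\Del(\,\cdot\,)$ and $r^{-1}\patl_r\Del(\,\cdot\,)$, the latter being absorbed by $r\ge r_0$ into a bound of Lemma \ref{Est-H3-RCD}; this is where $C_4$ acquires its dependence on $r_0$ and $R_0$). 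In the other two terms $\Del_r\psi_t$, resp. $\patl_r\patl_3\psi_t$, is placed in $L^\infty_x$: by Gagliardo--Nirenberg, resp. by the one-dimensional estimate \eqref{Est-LI-1d} used as in \eqref{Est-LI-rz0}, these norms are controlled by sixth-order quantities whose squares lie in $L^1_t$ by Lemma \ref{Est-H3-RCD}, while their companion factors ($\int r(\patl_r\patl_3\Del\psi)^2$, $\int r(\patl_r^2\Del\psi)^2$, $\int r^{-1}(\patl_r\Del\psi)^2$) are bounded in $L^\infty_t$ by Lemmas \ref{Est-H1w-RCD}--\ref{Est-H2-RCD}; hence these two terms form an $L^1_t$ remainder $g(t)$.

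Next I would bound the initial value $\int_{\R}\int^{R_0}_{r_0}\{(\Del_r\Del\psi_t)^2+(\patl_r\patl_3\Del\psi_t)^2\}(0,r,x_3)rdrdx_3$ by the data norms in the statement: at $t=0$ equation \eqref{CSR12-ps20-ph0} expresses $\patl_r\Del\psi_t$ as $\nu\patl_r\Del^2\psi_0$ plus quadratic terms in $\psi_0$, so applying $\Del_r$, resp. $\patl_r\patl_3$, and using \eqref{Est-LI-1d} and Gagliardo--Nirenberg gives the required bound (the data quantities $\Del_r\Del^2\psi_0$, $\patl_r\patl_3\Del^2\psi_0$ and $\patl_r\Del^2\psi_0$ are exactly what the top-order term needs). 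Gronwall's inequality applied to
\begin{equation*}\begin{split}
&\frac{\nu}{2}\frac{d}{dt}\int_{\R}\int^{R_0}_{r_0}\{(\Del_r\Del\psi_t)^2+(\patl_r\patl_3\Del\psi_t)^2\}rdrdx_3+\frac12\int_{\R}\int^{R_0}_{r_0}(\patl_r\Del\psi_{tt})^2rdrdx_3\\
&\qquad\le C\int_{\R}\int^{R_0}_{r_0}\{(\Del_r\Del\psi_t)^2+(\patl_r\patl_3\Del\psi_t)^2\}rdrdx_3+g(t)
\end{split}\end{equation*}
then yields the $L^\infty_t$ bound on $\Del_r\Del\psi_t$ and $\patl_r\patl_3\Del\psi_t$, and, after integrating in $t$, the bound on $\int_0^t\int_{\R}\int^{R_0}_{r_0}(\patl_r\Del\psi_{tt})^2rdrdx_3ds$. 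The $L^\infty_t$ bounds on $\Del_r\Del^2\psi$ and $\patl_r\patl_3\Del^2\psi$ are then immediate from the already-proved inequality \eqref{Est-r4t-r61}, whose right-hand side is now controlled in $L^\infty_t$; and the bound on $\int_0^t\int_{\R}\int^{R_0}_{r_0}(\patl_r\Del^2\psi_t)^2rdrdx_3ds$ follows by solving \eqref{CSR12-ps20-t2} for $\nu\patl_r\Del^2\psi_t$, squaring, estimating the nonlinear terms as before (now using the just-obtained $L^\infty_t$ bounds), and integrating in $t$.

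I expect the main obstacle to be precisely the four nonlinear terms of the second paragraph: one must decide, term by term, which sixth-order factor is to remain linear (feeding Gronwall) and which lower-order factor is to be estimated in $L^\infty_x$, and then follow the radial weights $r$, $r^{-1}$, $r^3$ closely enough that every piece lands either in the dissipation, in the Gronwall quantity, or in an $L^1_t$ remainder. This is routine but bookkeeping-heavy, and — through the handling of the $r^{-1}$ and $r^3$ weights via $r_0\le r\le R_0$ — is also the source of the dependence of $C_4$ on $r_0$ and $R_0$.
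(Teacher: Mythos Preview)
Your proposal is correct and follows essentially the same approach as the paper: test \eqref{CSR12-ps20-t2} against $r\partial_r\Delta\psi_{tt}$, use integration by parts on the viscous term to produce $\tfrac{\nu}{2}\tfrac{d}{dt}\int\{(\Delta_r\Delta\psi_t)^2+(\partial_r\partial_3\Delta\psi_t)^2\}r$, bound the four nonlinear terms via H\"older/Young with one factor in $L^\infty_x$, apply Gronwall, and then recover the purely spatial bounds from \eqref{Est-r4t-r61} and the $\psi_t$-analogue of \eqref{Est-r5-r3t}. The only (inessential) difference is that for the cross term $\Delta_r\psi_t\cdot\partial_r\partial_3\Delta\psi$ the paper places $\partial_r\partial_3\Delta\psi$ in $L^\infty$ (via \eqref{Est-LI-rz-d1}, which brings in $(\int(\Delta_r\Delta^2\psi)^2r)^{1/2}$ and hence, through \eqref{Est-r4t-r61}, feeds the Gronwall quantity with the $L^1_t$ coefficient $\int(\Delta_r\psi_t)^2r$ from Lemma~\ref{Est-r2t1-RCD}), whereas you place $\Delta_r\psi_t$ in $L^\infty$; both routes close.
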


\begin{proof} Taking inner product of the equation
\eqref{CSR12-ps20-t2} with $r\patl_r\Del\psi_{tt}$, we have
\beq\label{Est-H40}\begin{split}
&\int_{\R}\int^{R_0}_{r_0}\patl_r\Del(\psi_{tt}-\nu\Del\psi_t)
\patl_r\Del\psi_{tt}rdrdx_3\\
&-\int_{\R}\int^{R_0}_{r_0}\{\Del_r\psi\patl_r\patl_3\Del\psi\}_t
\patl_r\Del\psi_{tt}rdrdx_3\\
&+\int_{\R}\int^{R_0}_{r_0}\{\patl_r(\frac1r\patl_r\Del\psi)
\patl_r\patl_3\psi\}_t\patl_r\Del\psi_{tt}r^2drdx_3=0.
\end{split}\eeq

By part integration, we derive
\beq\label{H4-r4t1-dt}\begin{split}
&-\nu\int_{\R}\int^{R_0}_{r_0}\patl_r\Del^2\psi_t\patl_r\Del\psi_{tt}rdrdx_3\\
=&\frac{\nu}2\frac{d}{dt}\int_{\R}\int^{R_0}_{r_0}
(\Del_r\Del\psi_t)^2rdrdx_3\\
&+\frac{\nu}2\frac{d}{dt}\int_{\R}\int^{R_0}_{r_0}
(\patl_r\patl_3\Del\psi_t)^2rdrdx_3.\\
\end{split}\eeq

By H$\ddot{o}$lder's inequality, we obtain
\beq\label{H4-NT10}\begin{split}
&\Big|\int_{\R}\int^{R_0}_{r_0}\{\Del_r\psi\patl_r\patl_3\Del\psi\}_t
\patl_r\Del\psi_{tt}rdrdx_3\Big|\\
\le&\Big\{\|\Del_r\psi(t,\cdot)\|_{L^{\infty}}\Big(\int_{\R}
\int^{R_0}_{r_0}(\patl_r\patl_3\Del\psi_t)^2rdrdx_3\Big)^{1/2}\\
&+\|\patl_r\patl_3\Del\psi(t,\cdot)\|_{L^{\infty}}\Big(\int_{\R}
\int^{R_0}_{r_0}(\Del_r\psi_t)^2rdrdx_3\Big)^{1/2}\Big\}\\
&\hspace{35mm}\cdot\Big(\int_{\R}\int^{R_0}_{r_0}
(\patl_r\Del\psi_{tt})^2rdrdx_3\Big)^{1/2},\\
\end{split}\eeq

\beq\label{H4-NT20}\begin{split}
&\Big|\int_{\R}\int^{R_0}_{r_0}\{\patl_r(\frac1r\patl_r\Del\psi)
\patl_r\patl_3\psi\}_t\patl_r\Del\psi_{tt}r^2drdx_3\Big|\\
\le&\Big\{\|\patl_r\patl_3\psi(t,\cdot)\|_{L^{\infty}}\Big(\int_{\R}
\int^{R_0}_{r_0}(\Del_r\Del\psi_t)^2rdrdx_3\Big)^{1/2}\\
&+\|\patl_r\patl_3\psi_t(t,\cdot)\|_{L^{\infty}}\Big(\int_{\R}
\int^{R_0}_{r_0}(\Del_r\Del\psi)^2rdrdx_3\Big)^{1/2}\Big\}\\
&\hspace{35mm}\cdot\Big(\int_{\R}\int^{R_0}_{r_0}
(\patl_r\Del\psi_{tt})^2rdrdx_3\Big)^{1/2}.\\
\end{split}\eeq
In \eqref{H4-NT20}, the following observation \eqref{Eqv-Obs-r4t1} has been used
\beq\label{Eqv-Obs-r4t1}\begin{split}
&\int_{\R}\int^{R_0}_{r_0}(\Del_r\Del\psi_t)^2rdrdx_3\\
=&\int_{\R}\int^{R_0}_{r_0}\{(\patl_r^2\Del\psi_t)^2r
+(\patl_r\Del\psi_t)^2r^{-1}\}drdx_3.\\
\end{split}\eeq.

By the same argument as in the proof of \eqref{Est-LI-rz1}, we get
\beq\label{Est-LI-rz-t1}\begin{split}
&\|\patl_r\patl_3\psi_t(t,\cdot)\|_{L^{\infty}}^2
=\|(\patl_r\patl_3\psi_t)^2(t,\cdot)\|_{L^{\infty}}\\
\le&4\int^{R_0}_{r_0}\big\{\|\patl_r^2\patl_3^2\psi_t(t,r,\cdot)\|_{L^2}^{1/2}
\|\patl_r^2\patl_3\psi_t(t,r,\cdot)\|_{L^2}^{1/2}\\
&\hspace{13mm}\|\patl_r\patl_3^2\psi_t(t,r,\cdot)\|_{L^2}^{1/2}
\|\patl_r\patl_3\psi_t(t,r,\cdot)\|_{L^2}^{1/2}\big\}dr\\
\le&2\Big\{\int_{\R}\int^{R_0}_{r_0}(\Del_r\patl_3^2\psi_t)^2rdrdx_3\Big\}^{1/2}
\Big\{\int_{\R}\int^{R_0}_{r_0}(\Del_r\patl_3\psi_t)^2rdrdx_3\Big\}^{1/2}\\
\le&2\Big\{\int_{\R}\int^{R_0}_{r_0}(\Del_r\Del\psi_t)^2rdrdx_3\Big\}^{1/2}
\Big\{\int_{\R}\int^{R_0}_{r_0}(\patl_r\Del\psi_t)^2rdrdx_3\Big\}^{1/2},\\
\end{split}\eeq

\beq\label{Est-LI-rz-d1}\begin{split}
&\|\patl_r\patl_3\Del\psi(t,\cdot)\|_{L^{\infty}}^2
=\|(\patl_r\patl_3\Del\psi)^2(t,\cdot)\|_{L^{\infty}}\\
\le&4\int^{R_0}_{r_0}\big\{\|\patl_r^2\patl_3^2\Del\psi(t,r,\cdot)\|_{L^2}^{1/2}
\|\patl_r^2\patl_3\Del\psi(t,r,\cdot)\|_{L^2}^{1/2}\\
&\hspace{13mm}\|\patl_r\patl_3^2\Del\psi(t,r,\cdot)\|_{L^2}^{1/2}
\|\patl_r\patl_3\Del\psi(t,r,\cdot)\|_{L^2}^{1/2}\big\}dr\\
\le&2\Big\{\int_{\R}\int^{R_0}_{r_0}(\Del_r\patl_3^2\Del\psi)^2rdrdx_3\Big\}^{1/2}
\Big\{\int_{\R}\int^{R_0}_{r_0}(\Del_r\patl_3\Del\psi)^2rdrdx_3\Big\}^{1/2}.\\
\le&2\Big\{\int_{\R}\int^{R_0}_{r_0}(\Del_r\Del^2\psi)^2rdrdx_3\Big\}^{1/2}
\Big\{\int_{\R}\int^{R_0}_{r_0}(\patl_r\Del^2\psi)^2rdrdx_3\Big\}^{1/2}.\\
\end{split}\eeq
Here \eqref{Eqv-Obs-H3} \eqref{Eqv-Obs-H4} are used.

Inserting \eqref{Est-LI-rz1} \eqref{Est-LI-rr0} \eqref{Est-LI-rz-t1}
\eqref{Est-LI-rz-d1} into \eqref{H4-NT10} \eqref{H4-NT20}, we have
\beq\label{H4-NT120}\begin{split}
&\Big|-\int_{\R}\int^{R_0}_{r_0}\{\Del_r\psi\patl_r\patl_3\Del\psi\}_t
\patl_r\Del\psi_{tt}rdrdx_3\\
&+\int_{\R}\int^{R_0}_{r_0}\{\patl_r(\frac1r\patl_r\Del\psi)
\patl_r\patl_3\psi\}_t\patl_r\Del\psi_{tt}r^2drdx_3\Big|\\
\le&\frac12\int_{\R}\int^{R_0}_{r_0}(\patl_r\Del\psi_{tt})^2rdrdx_3
+C\int_{\R}\int^{R_0}_{r_0}(\patl_r\patl_3\Del\psi_t)^2rdrdx_3\\
&+C\int_{\R}\int^{R_0}_{r_0}(\Del_r\psi_t)^2rdrdx_3\Big\{\int_{\R}
\int^{R_0}_{r_0}(\Del_r\Del^2\psi)^2rdrdx_3\Big\}^{1/2}\\
&+C\int_{\R}\int^{R_0}_{r_0}(\Del_r\Del\psi_t)^2rdrdx_3+C,\\
\end{split}\eeq
where we have used the estimates \eqref{Est-H1-RCD00} \eqref{Est-H2-RCD00}
\eqref{Est-H3-RCD00}.

Inserting \eqref{H4-r4t1-dt} \eqref{H4-NT120} into \eqref{Est-H40},
we obtain
\beq\label{Est-H41}\begin{split}
&\nu\frac{d}{dt}\int_{\R}\int^{R_0}_{r_0}\{(\Del_r\Del\psi_t)^2
+(\patl_r\patl_3\Del\psi_t)^2\}rdrdx_3\\
&+\int_{\R}\int^{R_0}_{r_0}(\patl_r\Del\psi_{tt})^2rdrdx_3\\
\le&C\int_{\R}\int^{R_0}_{r_0}\{(\Del_r\Del\psi_t)^2+
(\patl_r\patl_3\Del\psi_t)^2\}rdrdx_3+C\\
&+C\int_{\R}\int^{R_0}_{r_0}(\Del_r\psi_t)^2rdrdx_3\Big\{\int_{\R}
\int^{R_0}_{r_0}(\Del_r\Del^2\psi)^2rdrdx_3\Big\}^{1/2}.\\
\end{split}\eeq
By the same argument as in the proof of \eqref{Est-r4t-r61}, we get
\beq\label{Est-r4t-r6t0}\begin{split}
&\int_{\R}\int^{R_0}_{r_0}\{(\Del_r\Del\psi_t)^2
+(\patl_r\patl_3\Del\psi_t)^2\}(0,r,x_3)rdrdx_3\\
\le&3\nu^2\int_{\R}\int^{R_0}_{r_0}(\Del_r\Del^2\psi_0)^2rdrdx_3+C.\\
\end{split}\eeq
Employing \eqref{Est-r2t1-RCD00} \eqref{Est-r4t-r61} \eqref{Est-H41}
\eqref{Est-r4t-r6t0} and Gronwall's inequality, we drive
\beq\label{Est-H42}\begin{split}
&\int_{\R}\int^{R_0}_{r_0}\{(\Del_r\Del\psi_t)^2
+(\patl_r\patl_3\Del\psi_t)^2\}rdrdx_3\\
&+\int_{\R}\int^{R_0}_{r_0}\{(\Del_r\Del^2\psi)^2
+(\patl_r\patl_3\Del^2\psi)^2\}rdrdx_3\\
&+\int^t_0\int_{\R}\int^{R_0}_{r_0}\{\patl_r\Del\psi_{ss}\}^2
(s,r,x_3)rdrdx_3\\
\le&C,\;\;\;\;\forall t\ge0.\\
\end{split}\eeq

Applying equation \eqref{CSR12-ps20-t2} and estimates \eqref{Est-H1-RCD00}
\eqref{Est-H2-RCD00} \eqref{Est-H3-RCD00}\eqref{Est-H42}, it is
easy to prove
\beq\label{Est-r3t2-r5t1}\begin{split}
\nu^2\int_{\R}\int^{R_0}_{r_0}(\patl_r\Del^2\psi_t)^2rdrdx_3
\le3\int_{\R}\int^{R_0}_{r_0}(\patl_r\Del\psi_{tt})^2rdrdx_3+C.\\
\end{split}\eeq

By \eqref{Est-H42} and \eqref{Est-r3t2-r5t1}, \eqref{Est-H4-RCD00}
is proved.
\end{proof}

Differentiating equation \eqref{CSR12-ps20-ph0} with $\patl_t^m$,
we obtian
\beq\label{CSR12-ps20-tm}\begin{split}
\patl_t^m\patl_r\Del(\psi_t-\nu\Del\psi)
-\patl_t^m\{\Del_r\psi\patl_r\patl_3\Del\psi\}
+\patl_t^m\{\patl_r(\frac1r\patl_r\Del\psi)\patl_r\patl_3\psi\}r=0.
\end{split}\eeq
Then taking inner product of the equation \eqref{CSR12-ps20-tm} with
$r\patl_r\Del\patl_t^m\psi$, $r\patl_r\Del\patl_t^{m+1}\psi$ and
$r\patl_r\patl_t^{m+1}\psi$ respectively, by induction with respect
to $m$, we can derive more regular estimations.

By Lemma \ref{Est-L2-RCD}--\ref{Est-H4-RCD}, the following result
is proved.

\begin{theorem}[Global Solution in Ring Cylinder Domain]
\label{CSR12-RCD-Thm-ps} Provided that $m\ge4$,
\[
u_0=\big(\patl_1
\patl_3\psi_0,\patl_2\patl_3\psi_0,-(\patl_1^2+\patl_2^2)\psi_0\big),
\]
$\psi_0=\psi_0(r,x_3)$ satisfies $\patl_r\psi_0|_{\patl{\mathbb D}}=
\patl_r\Del\psi_0|_{\patl{\mathbb D}}=0$ and
\beq\label{IDa0-RCD-psHm}\begin{split}
&\sum_{j\ge0,\;2j\le m}\int_{\R}\int^{R_0}_{r_0}\{(\Del_r\Del^j\psi_0)^2
+(\patl_r\patl_3\Del^j\psi_0)^2\}rdrdx_3\\
&+\sum_{k\ge0,\;2k+1\le m}\int_{\R}\int^{R_0}_{r_0}(\patl_r\Del\Del^k
\psi_0)^2rdrdx_3\\
&+\int_{\R}\int^{R_0}_{r_0}(\patl_r\Del\psi_0)^2r^{-1}drdx_3<\infty.\\
\end{split}\eeq
Then there exists a unique global solution $\psi$ of the problem
\eqref{CSR12-ps20-ph0} \eqref{IDa-ps} \eqref{BC-RCD00} such that
\beq\label{Est-RCD-psH5}\begin{split}
&\sum^2_{k=0}\int_{\R}\int^{R_0}_{r_0}\{(\Del_r\Del^k\psi)^2
+(\patl_r\patl_3\Del^k\psi)^2\}rdrdx_3\\
&+\sum^1_{k=0}\int_{\R}\int^{R_0}_{r_0}
(\patl_r\Del\Del^k\psi)^2rdrdx_3\\
&+\int_{\R}\int^{R_0}_{r_0}(\patl_r\Del\psi)^2r^{-1}drdx_3
\le C(\psi_0,\nu,t)<\infty,\;\;\;\;\;\;\forall t\ge0.\\
\end{split}\eeq
\end{theorem}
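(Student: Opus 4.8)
The plan is to combine the local solvability of the equivalent system \eqref{Equi-ps-w} with the a priori bounds of Lemmas \ref{Est-L2-RCD}--\ref{Est-H4-RCD}, via the standard continuation argument. First I would note that, by the discussion following \eqref{Equi-ps-w}, the problem \eqref{CSR12-ps20-ph0} \eqref{IDa-ps} \eqref{BC-RCD00} is equivalent to a parabolic system for the pair $(\psi,w)$ with $w=\Del\psi$ and Neumann data, for which a unique local solution exists on a maximal interval $[0,T_{max})$ in the function class encoded by \eqref{IDa0-RCD-psHm}; on this interval all the identities used to pass from \eqref{CSR12-ps20-ph0} to \eqref{CSR12-ps20-ph0-R} are valid and the boundary conditions \eqref{BC-RCD00} are propagated in time.

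Next I would assemble the a priori estimates in the order in which they feed each other. Lemma \ref{Est-L2-RCD} furnishes the conserved "energy" $\int\{(\Del_r\psi)^2+(\patl_r\patl_3\psi)^2\}\,rdrdx_3$ together with the dissipation of $\int(\patl_r\Del\psi)^2\,rdrdx_3$; Lemma \ref{Est-H1w-RCD} gives the monotone quantity $\int(\patl_r\Del\psi)^2r^{-1}drdx_3$; Lemma \ref{Est-H1-RCD} then controls $\int(\patl_r\Del\psi)^2\,rdrdx_3$ for all $t$ by a Gronwall argument using only the two preceding bounds; and Lemmas \ref{Est-H2-RCD}--\ref{Est-H4-RCD} bootstrap this successively to bounds on $\int\{(\Del_r\Del\psi)^2+(\patl_r\patl_3\Del\psi)^2\}\,rdrdx_3$, on the time integrals of $\patl_r\Del\psi_t$ and $\patl_r\Del^2\psi$, then on $\int\{(\patl_r\Del\psi_t)^2+(\patl_r\Del^2\psi)^2\}\,rdrdx_3$, and finally on $\int\{(\Del_r\Del\psi_t)^2+(\patl_r\patl_3\Del\psi_t)^2+(\Del_r\Del^2\psi)^2+(\patl_r\patl_3\Del^2\psi)^2\}\,rdrdx_3$. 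Collecting these, and using the cylindrical elliptic-type identities \eqref{Eqv-Obs-H3}, \eqref{Eqv-Obs-H4}, \eqref{Eqv-Obs-H5}, \eqref{Eqv-Obs-H4-r}, \eqref{Eqv-Obs-H4-z}, \eqref{Eqv-Obs-H4r2}, \eqref{Eqv-Obs-r4t1} to turn the radial/axial weighted norms into full ones, yields exactly \eqref{Est-RCD-psH5}. This already contains the order-$6$-in-$\psi$ quantities $\Del_r\Del^2\psi$, $\patl_r\patl_3\Del^2\psi$, which is why $m=4$ (equivalently $u_0\in H^4$) is the natural base case and why \eqref{IDa0-RCD-psHm} with $m=4$ is precisely the data norm the lemmas consume.

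For $m>4$ one differentiates \eqref{CSR12-ps20-ph0} by $\patl_t^j$ to obtain \eqref{CSR12-ps20-tm} and repeats, in the three pairings with $r\patl_r\Del\patl_t^j\psi$, $r\patl_r\Del\patl_t^{j+1}\psi$ and $r\patl_r\patl_t^{j+1}\psi$, the same Gronwall scheme; an induction on $j$ closes because at every level each nonlinear term, after the integrations by parts, is estimated through the one-dimensional interpolation \eqref{Est-LI-1d} and the Gagliardo--Nirenberg inequality \eqref{Est-LI-rr0} by a product of an already-controlled lower-order quantity and the current top-order dissipation with a coefficient small enough to be absorbed — exactly the pattern verified explicitly in Lemmas \ref{Est-H3-RCD}--\ref{Est-H4-RCD}. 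Standard elliptic regularity for $\Del\psi=w$ with Neumann data then upgrades these weighted $L^2$ bounds to the full $H^{m+2}$ bound on $\psi$, equivalently the bound $u\in C([0,T];H^m(\R^3)\cap H(\R^3))\cap L^2([0,T];H^{m+1}(\R^3))$ for every $T$. (For this theorem the constant is allowed to depend on $r_0$, $R_0$, $t$; the $r_0,R_0$-independence matters only later, and is what Lemmas \ref{Est-L2-RCD}--\ref{Est-H1-RCD} secure.) With these uniform-on-$[0,T_{max}]$ bounds in hand, if $T_{max}<\infty$ one reapplies the local existence theorem starting from a time close to $T_{max}$ to extend the solution past $T_{max}$, contradicting maximality; hence $T_{max}=\infty$. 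Uniqueness is inherited from uniqueness of the local solution, which (as in Propositions \ref{NS-LHmS}, \ref{NS-LMS}) also ensures the cylindrical symplectic form of the data is kept by the solution.

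The main obstacle is the bookkeeping of the second and third paragraphs: one must check that \emph{every} term produced by the integrations by parts in \eqref{CSR12-ps20-tm} is genuinely subcritical, i.e. that no top-order factor is paired with another top-order factor without a coefficient that can be hidden in the dissipation, and that the negative powers of $r$ generated by the operators $\patl_r(\tfrac1r\patl_r\,\cdot\,)$ and $\tfrac1r\patl_r$ are always compensated — either by the $r^{-1}$-weighted bound \eqref{Est-H1w-RCD00} or, harmlessly for fixed $\mathbb{D}$, by the factor $r_0^{-1}$. This is precisely what Lemmas \ref{Est-H2-RCD}--\ref{Est-H4-RCD} confirm at the first two levels, and the induction only propagates the pattern; the remaining computations are routine.
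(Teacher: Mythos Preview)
Your proposal is correct and follows essentially the same approach as the paper: the paper's proof consists precisely of invoking Lemmas \ref{Est-L2-RCD}--\ref{Est-H4-RCD} for the base case, together with the remark that for higher regularity one differentiates \eqref{CSR12-ps20-ph0} by $\partial_t^m$ to obtain \eqref{CSR12-ps20-tm} and pairs with $r\partial_r\Delta\partial_t^m\psi$, $r\partial_r\Delta\partial_t^{m+1}\psi$, $r\partial_r\partial_t^{m+1}\psi$ by induction. Your write-up is in fact more explicit than the paper's about the continuation argument and the bookkeeping of how the lemmas feed one another; the only minor slip is the aside about $u\in C([0,T];H^m(\mathbb{R}^3))$, which belongs to Theorem \ref{CSR12-Thm-u} rather than to the ring-cylinder statement.
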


\section{Fluid in ${\mathbb{R}}^3$}
\setcounter{equation}{0}

This section is devoted to consider the problem \eqref{NS1}--
\eqref{NSi} and prove Theorem \ref{CSR12-Thm-u}. Provided $\psi_0$
regular enough. If necessary, let $\psi_0$ be modified.

Take functions $\chi^n(r)\in C^{\infty}_0([0,\infty))$ $(n=1,2,\cdots)$
such that $0\le\chi^n(r)\le1$ and
\beq\label{Def-chi}
\chi^n(r)=\left\{\begin{array}{ll}1,&\frac1n\le r\le n\\
0,&0\le r\le\frac1{2n}\;\mbox{ or }\;2n\le r<\infty\end{array}\right..
\eeq
Moreover there exists positive constant $C$, which is independent of
$n$, such that
\beq\label{Est-chi}\begin{split}
&|\patl_r\chi^n(r)|\le\left\{\begin{array}{ll}Cn,&\frac1{2n}<r<\frac1n\\
\frac{C}{2n},&n<r<2n\end{array}\right.,\\
&r|\patl_r\chi^n(r)|\le C,\;\;\;\;\forall r\ge0,\;\;\forall n\ge1.\\
\end{split}\eeq

Let ${\mathbb D}^n={\mathbb D}\big|_{r_0=\frac1{2n}, R_0=2n}$ and
$\psi_0^n=\psi_0^n(r,x_3)$ be solution of the following problem
\beq\label{Eq-psi-n0}
\Del\psi_0^n=\int^r_{\frac1{2n}}\chi^n(s)\patl_s\Del\psi_0(s,x_3)ds,
\;\;\;\;\;\;(r,x_3)\in {\mathbb D}^n,
\eeq
\beq\label{BC-psi-n0}
\patl_r\psi_0^n\big|_{\patl{\mathbb D}^n}=0.
\eeq
Equation \eqref{Eq-psi-n0} is equivalent to the following equation
\beq\label{Eq-psi-n01}
\patl_r\Del\psi_0^n=\chi^n(r)\patl_r\Del\psi_0(r,x_3),
\;\;\;\;\;\;(r,x_3)\in {\mathbb D}^n.
\eeq

Taking inner product of equation \eqref{Eq-psi-n01} with $-r\patl_r
\psi_0^n$, we have
\beq\label{Est-L2-psi-n0}\begin{split}
&\int_{{\mathbb D}^n}\{(\Del_r\psi_0^n)^2+(\patl_r\patl_3\psi_0^n)^2\}
rdrdx_3\\
=&-\int_{{\mathbb D}^n}\chi^n\patl_r\Del\psi_0\patl_r\psi_0^nrdrdx_3\\
=&\int_{{\mathbb D}^n}\chi^n\{\Del_r\psi_0\Del_r\psi_0^n+
\patl_r\patl_3\psi_0\patl_r\patl_3\psi_0^n\}rdrdx_3\\
&+\int_{{\mathbb D}^n}\Del_r\psi_0\patl_r\psi_0^nr\patl_r\chi^ndrdx_3.\\
\end{split}\eeq
By H$\ddot{o}$lder's inequality, we obtain
\beq\label{Est-psn0-ps0-1}\begin{split}
&\Big|\int_{{\mathbb D}^n}\chi^n\{\Del_r\psi_0\Del_r\psi_0^n+
\patl_r\patl_3\psi_0\patl_r\patl_3\psi_0^n\}rdrdx_3\Big|\\
\le&\frac14\int_{{\mathbb D}^n}\{(\Del_r\psi_0^n)^2
+(\patl_r\patl_3\psi_0^n)^2\}rdrdx_3\\
&+\int_{{\mathbb D}^n}(\chi^n)^2\{(\Del_r\psi_0)^2
+(\patl_r\patl_3\psi_0)^2\}rdrdx_3\\
\le&\frac14\int_{{\mathbb D}^n}\{(\Del_r\psi_0^n)^2
+(\patl_r\patl_3\psi_0^n)^2\}rdrdx_3\\
&+\int_{\R}\int^{\infty}_0\{(\Del_r\psi_0)^2
+(\patl_r\patl_3\psi_0)^2\}rdrdx_3,\\
\end{split}\eeq
\beq\label{Est-psn0-ps0-2}\begin{split}
&\Big|\int_{{\mathbb D}^n}\Del_r\psi_0\patl_r\psi_0^nr
\patl_r\chi^ndrdx_3\Big|\\
\le&\frac14\int_{{\mathbb D}^n}(\patl_r\psi_0^n)^2r^{-1}drdx_3
+\int_{{\mathbb D}^n}(\Del_r\psi_0)^2(r\patl_r\chi^n)^2rdrdx_3\\
\le&\frac14\int_{{\mathbb D}^n}(\Del_r\psi_0^n)^2rdrdx_3
+C\int_{\R}\int^{\infty}_0(\Del_r\psi_0)^2rdrdx_3,\\
\end{split}\eeq
where the estimate \eqref{Est-chi} has been used. Inserting estimates
\eqref{Est-psn0-ps0-1} \eqref{Est-psn0-ps0-2} into
\eqref{Est-L2-psi-n0}, we get
\beq\label{Est-IDa0n-L2}\begin{split}
&\int_{{\mathbb D}^n}\{(\Del_r\psi_0^n)^2+(\patl_r\patl_3\psi_0^n)^2\}
rdrdx_3\\
\le&C\int_{\R}\int^{\infty}_0\{(\Del_r\psi_0)^2
+(\patl_r\patl_3\psi_0)^2\}rdrdx_3,\\
\end{split}\eeq
where constant $C$ is independent of $n$.

By equation \eqref{Eq-psi-n01}, we derive
\beq\label{Est-IDa0n-H1}\begin{split}
&\int_{{\mathbb D}^n}(\patl_r\Del\psi_0^n)^2rdrdx_3
\le\int_{\R}\int^{\infty}_0(\patl_r\Del\psi_0)^2rdrdx_3,\\
\end{split}\eeq
\beq\label{Est-IDa0n-H1w}\begin{split}
&\int_{{\mathbb D}^n}(\patl_r\Del\psi_0^n)^2r^{-1}drdx_3
\le\int_{\R}\int^{\infty}_0(\patl_r\Del\psi_0)^2r^{-1}drdx_3.\\
\end{split}\eeq

We consider the following problem
\beq\label{NSEq-psin}\begin{split}
\patl_r\Del(\psi_t^n-\nu\Del\psi^n)
-\Del_r\psi^n\cdot\patl_r\patl_3\Del\psi^n
+r\patl_r(\frac1r\patl_r\Del\psi^n)\cdot\patl_r\patl_3\psi^n=0,
\end{split}\eeq
\beq\label{NSIDa-psin0}
\psi^n(t,r,x_3)|_{t=0}=\psi_0^n(r,x_3),
\eeq
\beq\label{NSBC-psi-n0}
\patl_r\psi^n\big|_{\patl{\mathbb D}^n}=0,\;\;\;\;\;\;
\patl_r\Del\psi^n\big|_{\patl{\mathbb D}^n}=0.
\eeq
By Theorem \ref{CSR12-RCD-Thm-ps}, there exists a unique global regular
solution $\psi^n$ of the problem \eqref{NSEq-psin}--\eqref{NSBC-psi-n0}.
Moreover by Lemma \ref{Est-L2-RCD}--\ref{Est-H1-RCD}, we have
\beq\label{Est-psin-L2H1}\begin{split}
&\int_{{\mathbb D}^n}\{(\Del_r\psi^n)^2+(\patl_r\patl_3\psi^n)^2
+(\patl_r\Del\psi^n)^2+(\patl_r\Del\psi^n)^2r^{-2}\}rdrdx_3\\
&+\int^t_0\int_{{\mathbb D}^n}\{(\patl_r\Del\psi^n)^2
+(\Del_r\Del\psi^n)^2+(\patl_r\patl_3\Del\psi^n)^2\}rdrdx_3ds\\
\le&C(T,\psi_0,\nu),\;\;\;\;\;\;\forall T\ge0,\;\;0\le t\le T,\\
\end{split}\eeq
where constant $C(T,\psi_0,\nu)$ is independent of $n$.

Let
\beq\label{Def-un}\begin{split}
u^n(t,x)=\big(\patl_1\patl_3\psi^n,\patl_2\patl_3\psi^n,
-(\patl_1^2+\patl_2^2)\psi^n\big).\\
\end{split}\eeq
Then $u^n(t,x)$ is solution of the following problem
\beq\label{NS1-un}
u_t^n-\nu\Del u^n+(u^n\cdot\nab)u^n+\nab P=0,
\;\;\;\;\;\;x\in{\mathbb D}^n,
\eeq
\beq\label{NS2-un}
\nab\cdot u^n=0,
\eeq
\beq\label{NSi-un}
u^n|_{t=0}=u_0^n=\big(\patl_1\patl_3\psi_0^n,\patl_2\patl_3\psi_0^n,
-(\patl_1^2+\patl_2^2)\psi_0^n\big).
\eeq

By straightforward calculation, we have
\beq\begin{split}\label{EqvN-ups}
&\|u^n(t,\cdot)\|_{L^2({\mathbb D}^n)}^2=2\pi\int_{{\mathbb D}^n}
\{(\Del_r\psi^n)^2+(\patl_r\patl_3\psi^n)^2\}rdrdx_3,\\
&\|\nab u^n(t,\cdot)\|_{L^2({\mathbb D}^n)}^2=2\pi\int_{{\mathbb D}^n}
(\patl_r\Del\psi^n)^2rdrdx_3,\\
&\|\Del u^n(t,\cdot)\|_{L^2({\mathbb D}^n)}^2=2\pi\int_{{\mathbb D}^n}
\{(\Del_r\Del\psi^n)^2+(\patl_r\patl_3\Del\psi^n)^2\}rdrdx_3.\\
\end{split}\eeq
\eqref{Est-psin-L2H1} and \eqref{EqvN-ups} mean that
\beq\label{Est-un-L2H1}\begin{split}
&\|u^n(t,\cdot)\|_{L^2({\mathbb D}^n)}^2
+\|\nab u^n(t,\cdot)\|_{L^2({\mathbb D}^n)}^2\\
&+\int^t_0\{\|\nab u^n(s,\cdot)\|_{L^2({\mathbb D}^n)}^2
+\|\Del u^n(s,\cdot)\|_{L^2({\mathbb D}^n)}^2\}ds\\
\le&C(T,\psi_0,\nu),\;\;\;\;\;\;\forall T\ge0,\;\;0\le t\le T,\\
\end{split}\eeq
where constant $C(T,\psi_0,\nu)$ is independent of $n$.

Applying the equation \eqref{NS1-un} and estimate \eqref{Est-un-L2H1},
we derive
\beq\label{Est-unt-L2}\begin{split}
\int^t_0\|u^n_s(s,\cdot)\|_{L^2({\mathbb D}^n)}^2ds
\le C(T,\psi_0,\nu),\;\;\;\;\;\;\forall T\ge0,\;\;0\le t\le T,\\
\end{split}\eeq
where constant $C(T,\psi_0,\nu)$ is independent of $n$.

Let $n\rightarrow\infty$. By using estimates \eqref{Est-un-L2H1}
\eqref{Est-unt-L2} and Aubin-Lions's Lemma, there exist $u\in
L^{\infty}([0,T];H^1(\R^3)\cap H(\R^3))\cap L^2([0,T];H^2(\R^3))$ and
sub-sequence $\{u^{n^{\prime}}\}$ of sequence $\{u^n\}$ such that
\beq\label{Lim-un-L2}\begin{split}
u^{n^{\prime}}\rightarrow u\;\;\mbox{ as }\;\;n^{\prime}\rightarrow\infty,
\end{split}\eeq
weakly-$\ast$ in $L^{\infty}([0,T];H^1({\mathbb D}))$, weakly in $L^2(
[0,T];H^2({\mathbb D}))$ and strongly in $L^2([0,T];H^1({\mathbb D}))$
for any $T\ge0$ and any ${\mathbb D}$. Moreover
\beq\label{Lim-unt-L2}\begin{split}
u^{n^{\prime}}_t\rightarrow u_t\;\;\mbox{ as }\;\;n^{\prime}\rightarrow\infty,
\end{split}\eeq
weakly in $L^2([0,T];L^2({\mathbb D}))$ for any $T\ge0$ and any
${\mathbb D}$. By Proposition \ref{NS-LHmS},
this $u$ is the unique global solution of problem \eqref{NS1}--
\eqref{NSi} and $u\in C([0,T];H^1(\R^3)\cap H(\R^3))$ for any $T\ge0$.
Further more the following estimate holds
\beq\label{Est-u-H1}\begin{split}
&\|u(t,\cdot)\|_{L^2(\R^3)}^2
+\|\nab u(t,\cdot)\|_{L^2(\R^3)}^2\\
&+\int^t_0\{\|\nab u(s,\cdot)\|_{L^2(\R^3)}^2
+\|\Del u(s,\cdot)\|_{L^2(\R^3)}^2+\|u_s(s,\cdot)\|_{L^2(\R^3)}^2\}ds\\
\le&C(T,u_0,\nu),\;\;\;\;\;\;\forall T\ge0,\;\;0\le t\le T.\\
\end{split}\eeq

\eqref{Est-psin-L2H1} and \eqref{Def-un} imply that there exists
$\psi(r,x_3)$ such that
\beq\label{Def-u-psi}\begin{split}
u(t,x)=\big(\patl_1\patl_3\psi,\patl_2\patl_3\psi,
-(\patl_1^2+\patl_2^2)\psi\big).\\
\end{split}\eeq

Since the estimate \eqref{Est-u-H1} holds, it is well-known that the
solution of problem \eqref{NS1}--\eqref{NSi} is global and unique
provided $m>1$.

Theorem \ref{CSR12-Thm-u} is proved.


\section*{Acknowledgments}
This work is supported by National Natural Science Foundation of China--NSF,
Grant No.11971068 and No.11971077.


\end{document}